\newtheorem{theorem}{Theorem}%[section]
\newtheorem{example}[theorem]{Example}%[section]
\newtheorem{lemma}[theorem]{Lemma}%[section]
\newtheorem{proposition}[theorem]{Proposition}%[section]
\newtheorem{corollary}[theorem]{Corollary}%[section]
\numberwithin{equation}{section}
\def\g{\gamma}
\def\span{\operatorname{span}}
\newcommand{\C}{\ensuremath{\mathbb C}\xspace}
\renewcommand\sl{\mathfrak{sl}}
\newcommand{\Z}{\ensuremath{\mathbb{Z}}\xspace}
\newcommand{\N}{\ensuremath{\mathbb{N}}\xspace}
\renewcommand{\H}{\mathcal{H}}
\newcommand{\tg}{\tilde{g}}
\def\g{\mathfrak{g}}
\def\tg{\widetilde{\g}}
\newcommand{\tH}{\tilde{\mathcal{H}}}
\newcommand{\supp}{\ensuremath{\operatorname{supp}}\xspace}
\newcommand{\Ind}{\ensuremath{\operatorname{Ind}}\xspace}
\newcommand{\ad}{\operatorname{ad}\xspace}
\renewcommand{\phi}{\varphi}
\begin{document}
\title[Simple modules over conformal Galilei algebras]{On simple modules over \\ conformal Galilei algebras}
\author{Rencai L{\"u}, Volodymyr Mazorchuk and Kaiming Zhao}
\date{\today}

\begin{abstract}
We study irreducible representations of two classes of conformal Galilei algebras in $1$-spatial dimension.  
We construct a functor which transforms simple modules with nonzero central charge over the Heisenberg subalgebra
into simple modules over the conformal Galilei algebras. This can be viewed as an analogue of 
oscillator representations. We use oscillator representations to describe the structure of simple highest 
weight modules over conformal Galilei algebras. We classify simple weight modules with finite dimensional
weight spaces over finite dimensional Heisenberg algebras and use this classification and
properties of oscillator representations to classify simple weight modules with finite dimensional
weight spaces over conformal Galilei algebras.
\end{abstract}
\vspace{2mm}
\maketitle

\noindent {\bf Keywords:} Lie algebra; Heisenberg algebra; weight module; conformal Galilei algebra; 
Schr\"odinger algebra
\vspace{2mm}

\noindent{\bf 2010 Math. Subj. Class.:} 17B10, 17B81, 22E60
\vspace{2mm}

\section{Introduction and description of results}\label{s0}

Galilei groups and their Lie algebras  are important objects in theoretical
physics and attract a lot of attention in related mathematical areas, see for example \cite{AIK, CZ, GI1,
GI2, HP, NOR}. The conformal extension $\tg^{(l)}$ of the Galilei algebra is parameterized by 
$l\in\frac{1}{2}\mathbb{N}$ and is called the {\em $l$-conformal Galilei algebra}, see \cite{NOR}. 
The instance of $l=1/2$, known in the literature as the Schr{\"o}dinger algebra, turns up in a variety
of physical systems, see for example \cite{ADD, DDM, HU}. Various classes of representations of the
Schr{\"o}dinger algebra were studied and classified in \cite{DDM,WZ,D}. Motivated by recent investigations 
of the non-relativistic version of the AdS/CFT correspondence, conformal Galilei algebras with $l>1/2$ have
recently attracted considerable attention, see \cite{AIK,LSW1,LSW2,BG,DH}. In this paper, we study 
representations of the conformal Galilei algebras $\g^{(l)}$ for $l\in\frac{1}{2}\mathbb{N}$ and 
$\tg^{(l)}$ for $l\in\N-\frac{1}{2}$. Let us start by defining these algebras.

We denote by $\mathbb{Z}$, $\mathbb{Z}_+$, $\N$, and $\mathbb{C}$ the sets of  all integers, nonnegative 
integers, positive integers, and complex numbers, respectively. For a Lie algebra
$\mathfrak{a}$ we denote by $U(\mathfrak{a})$ the universal enveloping algebra of $\mathfrak{a}$.

For $l\in \N-\frac{1}{2}$, the conformal centrally extended Galilei algebra $\tg=\tg^{(l)}$ is the Lie algebra 
with the basis $\{e, h, f, p_{k},z\,\,|\,\,k=0,\ldots,2l\}$ and the Lie bracket given by:
\begin{displaymath}
[h,e]:=2e,\,\,\,[h,f]:=-2f,\,\,\, [e,f]:=h,
\end{displaymath}
\begin{displaymath}
[h,p_{k}]:= 2(l-k)p_{k},\,\,\,[e,
p_{k}]:=kp_{k-1},\,\,\,[f,p_k]:=(2l-k)p_{k+1},
\end{displaymath}
\begin{displaymath}
[z,\g^{(l)}]:=0,
\end{displaymath}
\begin{displaymath}  
[p_{k},p_{k'}]:=\delta_{k+k',2l}(-1)^{k+l+\frac{1}{2}}k!(2l-k)!
z,\quad k,k'=0,1,\ldots,2l.
\end{displaymath}
Denote  by $ \g=\g^{(l)}$ the quotient Lie algebra $\tg^{(l)}/\C z $ and note that the Lie algebras $\g^{(l)}$ 
can be defined for all $l\in\N$ by setting $z=0$ in the above. The Lie algebras $\g^{(l)}$ for $l\in\N$ do not 
have nontrivial central extensions. The algebras $\g^{(l)}$ for $l\in\frac12\N$ are called {\em centerless 
conformal Galilei algebras}. In this paper we study both $\g^{(l)}$ for $l\in\frac12\N$  and 
$\tg^{(l)}$ for $l\in\N-\frac12$.

The conformal Galilei algebra $\tg^{(l)}$ has the finite dimensional Heisenberg subalgebra:
$$\tH=\tH^{(l)}:=\span\{p_{k},z\,\,\,|\,\,\,k=0,1,\ldots,2l\}.$$
The algebra  $\tg^{(l)}$ is $\Z$-graded with respect to the
adjoint action of $h$.  For $i\in\Z$ we set 
$$\tg_i:=\{x\in\tg\,\,|\,\,[h,x]=ix\},\quad\tg_0:=\C h+\C z,\quad\tg_{\pm}:=\bigoplus_{i\in \pm \N} \tg_i.$$
For $l\in \N$ we have a Cartan subalgebra $\g_0:=\C h+\C p_l$ in $\g^{(l)}$ and 
an abelian subalgebra $\H=\span\{p_{k}\,\,\,|\,\,\,k=0,1,\ldots,2l\}$ in $\g^{(l)}$.

Both $\tg$ and $\g$ have an $\sl_2$-subalgebra spanned by $e,f,h$. Let $\mathfrak{a}$ be any of the following
Lie algebras: $\sl_2$, $\tg$, $\g$, or $\C h+\tH$. An $\mathfrak{a}$-module $V$ is called a weight
module if $h$ acts diagonally on $V$, i.e. $$ V=\bigoplus_{\lambda\in
\C} V_{\lambda},$$ where $V_{\lambda}:=\{v\in V \,\,|\,\, h v=\lambda v\}$.  Denote
$\supp(V):=\{\lambda\in \C|V_{\lambda}\ne 0\}$.  

For any $(\dot{z}, \dot{h})\in \C^2$,  we have the one dimensional
$\tg_+\oplus \tg_0$-module $\C w$ with $d w=\dot{d} w$, $z w=\dot{z}
w$ and $\tg_+ w=0$. Using it we define, as usual, the Verma module 
$$M_{\tg}(\dot{z},\dot{h})=\Ind_{\tg_0+\tg_+}^{\tg} \C w.$$
The module $M_{\tg}(\dot{z},\dot{h})$ is generated by a
highest weight vector (the lift of $w$) of highest weight $\dot{h}$.
Denote by $\bar{M}_{\tg}(\dot{z},\dot{h})$ the unique simple
quotient of $M_{\tg}(\dot{z},\dot{h})$. Similarly we may
define the Verma modules $M_\g(\dot{p}_{l},\dot{h})$ for $l\in \N$,
$M_{\sl_2}(\dot{h})$, $M_{\tH}(\dot{z})$ for $l\in \N-1/2$, and the
corresponding simple quotients $\bar{M}_\g(\dot{p}_{l}, \dot{h})$,
$\bar{M}_{\sl_2}(\dot{h})$ and $\bar{M}_{\tH}(\dot{z})$. Verma modules have the 
usual universal property that each module generated by a highest weight vector is 
a quotient of  some Verma module. Similarly one defines lowest weight module.

We start with a brief overview of known results.
It is well-known that the Verma module  $M_{\sl_2}(\dot{h})$ is
simple if and only if $\dot h\notin \Z^+$ (see e.g.  \cite[Chapter~3]{M}),  
while $M_{\tH}(\dot{z})$ is simple if and only if $\dot z\ne0$ (see  e.g. \cite{KR}).
All simple modules over $\sl_2$ and $\tH^{(\frac{1}{2})}$ were classified in 
\cite{B} (up to description of irreducible elements
of some non-commutative Euclidean rings). An explicit description of simple lowest (or
highest) weight $\tg^{(l)}$-modules for small $l$ was given in
\cite{AI, DDM, Mr}. Furthermore, an explicit description of simple lowest
(or highest) weight $\tg^{(l)}$-modules over all $\tg^{(l)}$ was obtained in
\cite{AIK}. Very recently, a classification of simple weight modules
with finite dimensional weight spaces over the Shr{\"o}dinger algebra $\tg^{(1/2)}$ 
was given in \cite{D}.

Next we briefly describe the results of this paper. Our fist observation is
the following construction of what is natural to call {\em oscillator representations} 
for $\tg^{(l)}$. For $l\in \N-\frac{1}{2}$ denote by $U(\tH^{(l)})_{(z)}$ the localization of
$U(\tH^{(l)})$ with respect to the multiplicative subset $\{z^i\vert i\in\Z_0\}$.

\begin{theorem} \label{thm-1}
Let $l\in \N-\frac{1}{2}$.
\begin{enumerate}[$($i$)$]
\item\label{thm-1.1} 
There is a unique algebra homomorphism $\Phi_l:U(\tg^{(l)})\to U(\tH^{(l)})_{(z)}$ which
is the identity on $U(\tH^{(l)})$ and such that 
\begin{displaymath}
\Phi_l(e)=E:=\frac{1}{{z}}\left(\sum_{k=1}^{2l}(-1)^{k+l+\frac{1}{2}}
\frac{(l-k)}{(k-1)!(2l-k)! }p_{k-1}p_{2l-k}\right),
\end{displaymath}
\begin{displaymath}
\Phi_l(f)=F:=\frac{1}{{z}}\left(\sum_{k=1}^{2l}(-1)^{k+l
-\frac{1}{2}}\frac{(l-k)}{(k-1)!(2l-k)!
}p_{k}p_{2l-k+1}\right),
\end{displaymath}
\begin{displaymath}
\Phi_l(h)=H:=\frac{2}{{z}}\left(\sum_{k=0}^{l-\frac{1}{2}}
(-1)^{k+l-\frac{1}{2}}\frac{l-k}{k!(2l-k)!}p_{2l-k}p_k\right)-\frac{(l+\frac{1}{2})^2}{2}.
\end{displaymath}
\item\label{thm-1.2} 
Composition of induction from $U(\tH^{(l)})$ to $U(\tH^{(l)})_{(z)}$ with pulling back along
$\Phi$ defines a functor from the category of $\tH^{(l)}$-modules to the category of $\tg^{(l)}$-modules 
which annihilates all simple $\tH^{(l)}$-modules with zero central charge and sends simple 
$\tH^{(l)}$-modules with nonzero central charge to simple $\tg^{(l)}$-modules.
\end{enumerate}
\end{theorem}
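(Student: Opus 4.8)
The plan is to verify part \eqref{thm-1.1} by a direct computation and then deduce part \eqref{thm-1.2} from the algebraic properties of the map $\Phi_l$. For \eqref{thm-1.1}, the existence and uniqueness of $\Phi_l$ amounts to checking that the elements $E, F, H \in U(\tH^{(l)})_{(z)}$, together with the canonical images of the $p_k$ and $z$, satisfy all the defining relations of $\tg^{(l)}$. Uniqueness is immediate, since $e, f, h$ and the $p_k, z$ generate $U(\tg^{(l)})$ and the values of $\Phi_l$ are prescribed on all generators. For existence I would check the relations in groups: first the $\sl_2$-relations $[H,E]=2E$, $[H,F]=-2F$, $[E,F]=H$; then the module relations $[H,p_k]=2(l-k)p_k$, $[E,p_k]=kp_{k-1}$, $[F,p_k]=(2l-k)p_{k+1}$; the relation $[z, \cdot]=0$ is automatic since $z$ is central in $U(\tH^{(l)})_{(z)}$; and the Heisenberg relation among the $p_k$ holds by construction. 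These verifications reduce to manipulating quadratic expressions in the $p_k$ modulo the Heisenberg relations $p_kp_{k'}-p_{k'}p_k = \delta_{k+k',2l}(-1)^{k+l+\frac12}k!(2l-k)!\,z$; the signs and the combinatorial factors $\frac{l-k}{(k-1)!(2l-k)!}$ are arranged precisely so that the telescoping sums collapse correctly, and the constant $-\frac{(l+\frac12)^2}{2}$ in $H$ is the normalization forcing $[E,F]=H$ rather than $H$ plus a scalar.

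For \eqref{thm-1.2}, the functor in question is $V \mapsto (U(\tH^{(l)})_{(z)} \otimes_{U(\tH^{(l)})} V)$ regarded as a $\tg^{(l)}$-module via $\Phi_l$. If $V$ has zero central charge, then $z$ acts as $0$ on $V$, so $z$ becomes non-invertible on the localization and in fact $U(\tH^{(l)})_{(z)} \otimes_{U(\tH^{(l)})} V = 0$, since $1\otimes v = z^{-1}\otimes zv = 0$; this gives the annihilation claim. Now suppose $V$ is simple with nonzero central charge $\dot z$. Then $z$ already acts invertibly on $V$ (as the scalar $\dot z$), so localization does nothing: $U(\tH^{(l)})_{(z)} \otimes_{U(\tH^{(l)})} V \cong V$ as a $\tH^{(l)}$-module, and the resulting $\tg^{(l)}$-module, call it $\Phi_l^* V$, coincides with $V$ as a vector space, with $\tH^{(l)}$ acting in the original way and $e, f, h$ acting through the operators $E, F, H$. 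The key point is then that $\Phi_l^* V$ is simple as a $\tg^{(l)}$-module: any nonzero $\tg^{(l)}$-submodule $W$ is in particular stable under $\tH^{(l)} = \Phi_l(\tH^{(l)}) \subseteq \Phi_l(U(\tg^{(l)}))$, hence is a nonzero $\tH^{(l)}$-submodule of the simple module $V$, so $W = V$.

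The main obstacle is the computational core of part \eqref{thm-1.1}: one must carefully organize the double sums defining $E$, $F$, $H$ and track how commuting a $p_j$ past them produces exactly the shifted sum defining the target, with the leftover terms from the Heisenberg relation conspiring to vanish or to produce the stated constant. A clean way to control this is to introduce a generating-function bookkeeping for the $p_k$ (or, equivalently, to recognize $E, F, H$ as a Weil-representation-type $\sl_2$-triple inside the Heisenberg algebra) so that the relations follow from a single $\mathfrak{sp}$-type identity rather than from $2l$ separate index computations; but even done naively the verification is finite and mechanical. Part \eqref{thm-1.2}, by contrast, is essentially formal once \eqref{thm-1.1} is in hand, the only subtlety being the remark that on a module with nonzero central charge the localization at $z$ is redundant, which is what makes the functor land on honest (rather than merely ``generalized'') $\tg^{(l)}$-modules and preserves simplicity.
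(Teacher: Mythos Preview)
Your proposal is correct and follows essentially the same approach as the paper. The only organizational difference worth noting is that the paper checks the module relations $[H,p_i]$, $[E,p_i]$, $[F,p_i]$ first and then derives $[H,E]=2E$ and $[H,F]=-2F$ as immediate consequences (since $E$ and $F$ are quadratic in the $p_k$'s of the right $h$-degree), rather than verifying the $\sl_2$-relations independently; and for part~\eqref{thm-1.2} the paper simply asserts that it follows from~\eqref{thm-1.1}, whereas you spell out the localization and simplicity argument in full, which is a welcome addition.
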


Theorem~\ref{thm-1} is inspired by  generalized oscillator representations for the Heisenberg-Virasoro
algebra constructed in \cite[Section~3.2]{LZ1}.
If $V$ is a simple $\tH^{(l)}$-module with nonzero action of $z$, we will denote the image of
$V$ under the functor described in Theorem~\ref{thm-1}\eqref{thm-1.2} by  $V^{\tg}$.
Any $\sl_2$-module $V$ becomes a $\tg^{(l)}$-module by setting $\H\cdot V=0$. 
The resulting  $\tg^{(l)}$-module will be also denoted by $V^{\tg}$ (we will make sure that
this abuse of notation will not create any confusion). 

Consider $\tg^{(l)}$ for $l\in \N-\frac{1}{2}$. Then for every $\dot{h}\in\C$ we have
(using the uniqueness of simple highest weight modules) that
$\bar{M}_{\tg^{(l)}}(0,\dot{h})\cong \bar{M}_{\sl_2}(\dot{h})^{\tg}$, in particular,
$\tH \bar{M}_{\tg^{(l)}}(0,\dot{h})=0$. 
All highest weight $\tg^{(l)}$-modules with nonzero central charge
were explicitly described in \cite{AIK}. These modules can now be realized in a totally
different way as tensor products of certain $\sl_2$-modules and oscillator modules as follows:

\begin{theorem}\label{thm-2} 
Let $l\in \N-\frac{1}{2}$ and $\tg=\tg^{(l)}$. For any $\dot{h}\in \C$ and $\dot{z}\in \C^*$ we have
$$M_{\tg}(\dot{z},\dot{h})\cong M_{\tH}(\dot{z})^{\tg}\otimes
M_{sl_2}(\dot{h}+{\text{\tiny$\frac{1}{2}$}}(l+{\text{\tiny$\frac{1}{2}$}})^2)^{\tg},$$
$$\bar{M}_{\tg}(\dot{z},\dot{h})\cong
{M}_{\tH}(\dot{z})^{\tg}\otimes
\bar{M}_{sl_2}(\dot{h}+{\text{\tiny$\frac{1}{2}$}}(l+{\text{\tiny$\frac{1}{2}$}})^2)^{\tg}.$$
Hence the Verma module $M_{\tg}(\dot{z},\dot{h})$ is simple
if and only if $\dot{h}+\frac{(l+\frac{1}{2})^2}{2}\not\in \Z_+$.
\end{theorem}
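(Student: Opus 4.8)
The plan is to deduce Theorem~\ref{thm-2} from Theorem~\ref{thm-1} together with standard facts about tensor products and highest weight modules. First I would observe that the map $x \mapsto \Phi_l(x)\otimes 1 + 1\otimes \rho(x)$, where $\rho$ denotes the $\sl_2$-action shifted by the appropriate scalar, should realize $\tg^{(l)}$ inside $U(\tH^{(l)})_{(z)}\otimes U(\sl_2)$; concretely, the point is that the oscillator assignment $\Phi_l$ from Theorem~\ref{thm-1}\eqref{thm-1.1} gives an $\sl_2$-triple $(E,F,H)$ acting on $M_{\tH}(\dot z)^{\tg}$, and one forms the outer tensor product with an honest $\sl_2$-module. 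Because $\tH$ acts on the second tensor factor trivially (by the convention $\H\cdot V = 0$ for $\sl_2$-modules $V^{\tg}$), the $\tH^{(l)}$-action on the tensor product is just the one on $M_{\tH}(\dot z)^{\tg}$, so the tensor product is a well-defined $\tg^{(l)}$-module with central charge $\dot z$.

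Next I would compute the highest weight of $M_{\tH}(\dot z)^{\tg}\otimes M_{\sl_2}(\mu)^{\tg}$, where $\mu = \dot h + \tfrac12(l+\tfrac12)^2$. The key computation is that the highest weight vector of $M_{\tH}(\dot z)$ — the vector killed by $p_0,\dots,p_{l-1/2}$ or the relevant half, i.e.\ by $\tH_+$ — is an eigenvector for $H = \Phi_l(h)$; from the explicit formula for $H$, acting on this vector only the constant term $-\tfrac12(l+\tfrac12)^2$ survives (all the $p_{2l-k}p_k$ terms with $k<l$ involve an annihilation operator and vanish), so $H$ acts by $-\tfrac12(l+\tfrac12)^2$ on it. Tensoring with the highest weight vector of $M_{\sl_2}(\mu)$, on which $h$ acts by $\mu$, the diagonal $h = H\otimes 1 + 1\otimes h$ acts by $-\tfrac12(l+\tfrac12)^2 + \mu = \dot h$, and one checks $E = \Phi_l(e)$ annihilates the $\tH$-highest weight vector (again all terms involve an annihilation operator), so the pure tensor of highest weight vectors is a $\tg$-highest weight vector of weight $\dot h$ with central charge $\dot z$. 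Moreover $M_{\tH}(\dot z)$ is a free $U(\tH_-)$-module of rank one and $M_{\sl_2}(\mu)$ is free of rank one over $U(\C f)$, and a PBW/character count shows the tensor product is generated by that vector and has the same graded dimensions as $M_{\tg}(\dot z,\dot h)$; hence the canonical surjection $M_{\tg}(\dot z,\dot h)\tto M_{\tH}(\dot z)^{\tg}\otimes M_{\sl_2}(\mu)^{\tg}$ (existing by the universal property) is an isomorphism.

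For the simple quotients, the argument is: $M_{\tH}(\dot z)$ is already simple since $\dot z\ne 0$ (cited in the excerpt), and Theorem~\ref{thm-1}\eqref{thm-1.2} says $M_{\tH}(\dot z)^{\tg}$ is a simple $\tg^{(l)}$-module. So tensoring with $\bar M_{\sl_2}(\mu)$ should yield something whose only composition factors come from composition factors of $\bar M_{\sl_2}(\mu)$; since $M_{\tH}(\dot z)^{\tg}\otimes \bar M_{\sl_2}(\mu)^{\tg}$ is a highest weight $\tg$-module (quotient of $M_{\tg}(\dot z,\dot h)$) with simple top, and by the same reasoning any proper submodule of $M_{\tg}(\dot z,\dot h)$ maps onto a submodule coming from a proper submodule of $M_{\sl_2}(\mu)$, one gets that $M_{\tH}(\dot z)^{\tg}\otimes \bar M_{\sl_2}(\mu)^{\tg}\cong\bar M_{\tg}(\dot z,\dot h)$. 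The last sentence about simplicity of the Verma module is then immediate: $M_{\tg}(\dot z,\dot h)$ is simple iff $M_{\sl_2}(\mu) = \bar M_{\sl_2}(\mu)$ iff $\mu\notin\Z_+$, i.e.\ $\dot h + \tfrac12(l+\tfrac12)^2\notin\Z_+$, using the classical criterion for $\sl_2$-Verma modules quoted above.

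The main obstacle I expect is establishing rigorously that a submodule (or the composition series) of the tensor product is controlled purely by the $\sl_2$-factor — i.e.\ that the functor $V\mapsto M_{\tH}(\dot z)^{\tg}\otimes V^{\tg}$ is exact and reflects simplicity. This requires knowing that $M_{\tH}(\dot z)^{\tg}$ stays ``simple enough'' after tensoring; concretely one needs that $U(\tg)$ acting on the tensor product already generates $U(\tH^{(l)})_{(z)}$ on the first factor (so that the first factor cannot split off a proper piece), which should follow because $\Phi_l$ is onto $U(\tH^{(l)})_{(z)}$ (the $p_k$ are in the image via $\tH$, and $z$ is invertible), combined with a careful bookkeeping of weight spaces to rule out ``mixed'' submodules. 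The character/PBW comparison in the Verma case is routine, and the $\sl_2$ simplicity criterion is quoted, so the representation-theoretic exactness bookkeeping is where the real work lies.
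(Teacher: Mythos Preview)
Your argument is correct and essentially complete; it simply takes a more hands-on route than the paper's. The paper does not build the map from the universal property and then match characters. Instead it starts inside $M_{\tg}(\dot z,\dot h)$, observes that $U(\tH)w=M_{\tH}(\dot z)$, uses Theorem~\ref{thm-1} to identify this as the $\C h+\C e+\tH$-module $(M_{\tH}(\dot z))^{\tg}\otimes\C u$ (with $h u=(\dot h+\tfrac12(l+\tfrac12)^2)u$), and then applies a tensor--induction compatibility lemma \cite[Lemma~8]{LZ1} to pull the oscillator factor through the induction:
\[
M_{\tg}(\dot z,\dot h)\cong \Ind_{\C h+\C e+\tH}^{\tg}\bigl((M_{\tH}(\dot z))^{\tg}\otimes\C u\bigr)\cong (M_{\tH}(\dot z))^{\tg}\otimes \Ind_{\C h+\C e+\tH}^{\tg}\C u.
\]
No character comparison is needed. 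For the simple quotient and the simplicity criterion, the paper simply invokes \cite[Theorem~7]{LZ1}, which is precisely the statement you isolate as ``the main obstacle'': the functor $N\mapsto M_{\tH}(\dot z)^{\tg}\otimes N^{\tg}$ preserves and reflects simplicity. So the paper outsources to \cite{LZ1} exactly the bookkeeping you propose to do by hand.

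One small point to tighten in your version: the universal property only gives you a map onto $U(\tg)(w_1\otimes w_2)$, so before the character comparison you must check that this cyclic submodule is the whole tensor product. This is easy (apply $\tH_-$ to reach $M_{\tH}(\dot z)\otimes w_2$, then use that $f(v\otimes w_2)-Fv\otimes w_2=v\otimes fw_2$ to climb the $\sl_2$-tower), but it should be said explicitly. Once surjectivity is in hand, the graded-dimension equality forces the isomorphism. Your sketch of why submodules are controlled by the $\sl_2$-factor is exactly the content of \cite[Theorem~7]{LZ1}; the key mechanism you identify---that the $p_k$ already act as a full copy of $U(\tH)_{(z)}$ on the first tensorand---is the right one.
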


Theorem~\ref{thm-2} provides a very clear description of simple highest weight $\tg^{(l)}$-modules 
for  $l\in \N-\frac{1}{2}$. Furthermore, Theorem~\ref{thm-2} generalizes to the following result
which even covers non-weight modules:

\begin{theorem}\label{thm-3}  
Let $l\in \N-\frac{1}{2}$ and $\tg=\tg^{(l)}$.
\begin{enumerate}[$($i$)$]
\item\label{thm-3.1}
If $M$ is a simple $\tH^{(l)}$-module with nonzero central
charge, and $N$ is a simple $\sl_2$-module, then $M^{\tg}\otimes
N^{\tg}$ is a simple $\tg$-module.
\item\label{thm-3.2}
Suppose that $ V$ is a simple $\tg$-module with nonzero central
charge $\dot{z}$. If $\mathrm{Res}^{\tg}_{\tH^{(l)}}V$ contains a simple submodule
$M$, then $V\cong M^{\tg}\otimes N^{\tg}$ for some simple
$\sl_2$-module $N$.
\end{enumerate}
\end{theorem}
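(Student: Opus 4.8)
The plan is to deduce both parts from Theorem~\ref{thm-1}, Theorem~\ref{thm-2}, and the PBW-type decomposition of $U(\tg)$ relative to the Heisenberg subalgebra. The key structural fact I would isolate first: as a left $U(\tH^{(l)})$-module, $U(\tg^{(l)})$ is free, and more precisely $U(\tg)\cong U(\tH)\otimes U(\sl_2)$ as vector spaces, so that for any $\tH$-module $M$ with nonzero central charge the induced/pulled-back module $M^{\tg}$ restricts over $\sl_2$ (acting via the homomorphism $\Phi_l$ of Theorem~\ref{thm-1}) in a controlled way. The point of the oscillator construction is that $e,f,h$ act on $M^{\tg}$ through the elements $E,F,H\in U(\tH)_{(z)}$, i.e.\ through operators that already lie in (the localization of) the image of $\tH$; consequently on the tensor product $M^{\tg}\otimes N^{\tg}$ the $\sl_2$-action is ``diagonal plus $\H$-twist'' and the two tensor factors interact only through these $\sl_2$-elements.

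For part~\eqref{thm-3.1}: let $W\subseteq M^{\tg}\otimes N^{\tg}$ be a nonzero submodule. Since $z$ acts by the nonzero scalar $\dot z$ and the $p_k$ act only on the first factor (as $p_k\otimes 1$), $W$ is in particular a module over $\tH^{(l)}$ acting on the first tensorand; because $M$ is simple with $z$ acting invertibly, a standard density/Wedderburn argument (the image of $U(\tH)_{(z)}$ in $\End_\C(M)$ is all of $\End_\C(M)$ when $M$ is finite-dimensional, and in general one uses that $M$ is simple over the localized algebra) shows $W$ must contain an element of the form $m\otimes n$ with $m\ne0$, and then that $W\supseteq M\otimes n'$ for a suitable $0\ne n'\in N$ — here one exploits that $\End_{\tH}(M)=\C$ (a Schur-type statement for simple modules over the localized Heisenberg algebra) to ``factor out'' the first variable. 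Once $W$ contains $M\otimes n'$, apply $U(\sl_2)$: modulo the first factor the $\sl_2$-action on $1\otimes N$ is exactly the given simple $\sl_2$-action on $N$ (the $E,F,H$-corrections act on the first factor, which we have already filled), so we sweep out $M\otimes N$, giving $W=M^{\tg}\otimes N^{\tg}$. The cleanest packaging is: $M^{\tg}\otimes N^{\tg}$ is simple over $U(\tH)\otimes U(\sl_2)$ because $M$ is simple over $U(\tH)_{(z)}$ and $N$ is simple over $U(\sl_2)$ and the former has trivial endomorphism ring, i.e.\ a tensor-product-of-simples lemma.

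For part~\eqref{thm-3.2}: given the simple $\tg$-module $V$ with central charge $\dot z\ne0$ and a simple $\tH$-submodule $M\subseteq \mathrm{Res}^{\tg}_{\tH}V$, I would build the comparison map out of the adjunction in Theorem~\ref{thm-1}\eqref{thm-1.2}. Since $z$ acts invertibly on $V$, the inclusion $M\hookrightarrow V$ extends (by freeness of the localization) to a $\tH$-homomorphism $M^{\tg}|_{\tH}=U(\tH)_{(z)}\otimes_{U(\tH)}M\to V$; one checks this is even a $\tg$-homomorphism, or better, one observes that $E,F,H$ act on $V$ as the \emph{specific} operators given by $\Phi_l$ (because $z$ is invertible and the defining relations of $\tg$ force $e,f,h$ to equal $E+{}$(something commuting with $\tH$), $F+\cdots$, $H+\cdots$). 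Define $N$ to be the space of these ``$\sl_2$-corrections'': concretely set $e':=e-E$, $f':=f-F$, $h':=h-H$ acting on $V$; one verifies $e',f',h'$ commute with all of $\tH$ and satisfy the $\sl_2$-relations, so $V$ becomes a module over $\tH\otimes\sl_2'$, whence $V\cong M\otimes N$ for $N:=\Hom_{\tH}(M,V)$ an $\sl_2'$-module, and simplicity of $V$ forces $N$ simple over $\sl_2$; unraveling gives $V\cong M^{\tg}\otimes N^{\tg}$.

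The main obstacle I anticipate is the commutation claim in \eqref{thm-3.2} — verifying that $e':=e-E$, $f':=f-F$, $h':=h-H$ (as operators on any $\tg$-module with invertible $z$) commute with $p_0,\dots,p_{2l}$ and close into a copy of $\sl_2$. This is exactly the content that makes the ad~hoc formulas in Theorem~\ref{thm-1} work, so presumably it has been established (or is a direct bracket computation) in proving Theorem~\ref{thm-1}; I would quote it. Given that, the rest is the tensor-product-of-simple-modules lemma over $U(\tH)_{(z)}\otimes U(\sl_2)$, using $\End_{U(\tH)_{(z)}}(M)=\C$, which is routine. A secondary technical point is justifying $\End_{U(\tH)_{(z)}}(M)=\C$ for a simple module $M$ that need not be finite-dimensional or countable-dimensional; if dimension-counting Schur fails, one instead argues directly with the explicit generators $p_k$ and the Weyl-algebra structure of $U(\tH)_{(z)}$ (it is a tensor product of a Weyl algebra with a polynomial ring in $z^{\pm1}$, on which $z$ is central), where endomorphisms are computed via the faithful Weyl-algebra action.
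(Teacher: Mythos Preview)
Your approach is correct and, at the level of mathematics, coincides with the paper's; the paper simply outsources both halves to \cite[Theorem~7 and Lemma~8]{LZ1} rather than redoing the argument. In particular, your key structural observation for part~\eqref{thm-3.2}---that $e':=e-E$, $f':=f-F$, $h':=h-H$ centralize $\tH^{(l)}$ inside $U(\tg)_{(z)}$ and close into a copy of $\sl_2$---is exactly right: the centralizing property is an immediate restatement of Lemmata~\ref{claim1} and~\ref{claim2} (namely $[e,p_i]=[E,p_i]$, $[f,p_i]=[F,p_i]$, $[h,p_i]=[H,p_i]$), and $[e',f']=h'$, $[h',e']=2e'$, $[h',f']=-2f'$ then follow formally from these together with Lemma~\ref{claim3}. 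With this in hand, your plan---view $V$ as a module over $U(\tH)_{(z)}\otimes U(\sl_2')$, set $N=\Hom_{\tH}(M,V)$, and invoke the tensor-product-of-simples lemma---is precisely the content of the cited results in \cite{LZ1}.

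The paper's phrasing for \eqref{thm-3.2} differs cosmetically: rather than name $e',f',h'$ explicitly, it writes $M\cong M^{\tg}\otimes\C$ as $\tH$-modules, applies \cite[Lemma~8]{LZ1} to obtain $\Ind_{\tH}^{\tg}M\cong M^{\tg}\otimes U(\sl_2)^{\tg}$, and then notes that the simple quotient $V$ of this induced module must, by \cite[Theorem~7]{LZ1}, be of the form $M^{\tg}\otimes N^{\tg}$. The induced-module formulation and your centralizer formulation are equivalent; yours is arguably more transparent since it makes visible \emph{why} the tensor decomposition exists.

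One genuine technical point you flag yourself: the Jacobson-density step in part~\eqref{thm-3.1} (and implicitly in \eqref{thm-3.2}) needs $\End_{\tH}(M)=\C$. Over $\C$ this follows from Dixmier's version of Schur's lemma provided $\dim_\C M$ is at most countable; the paper does not discuss this, and the cited \cite[Theorem~7]{LZ1} presumably carries the same hypothesis. Your proposed workaround via the Weyl-algebra structure of $U(\tH)_{(z)}$ is not obviously sufficient in full generality (simple modules over Weyl algebras can have nontrivial endomorphism rings in pathological uncountable situations), so if you want an unconditional statement you should either impose countable dimension or restrict to the cases actually used later in the paper, all of which are countable-dimensional.
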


To describe simple highest weight $\g^{(l)}$-modules for $l\in \N$, we need more notation. 
Let   $\dot p_l, \dot h\in\C$ and $$\mathfrak{n}=\span\{e,h,p_i\,|\,i=0,\ldots, 2l\},\qquad F_1=\C[x].$$
Define an $\mathfrak{n}$-module structure on the Fock space $F_1$ as follows:
\begin{equation}\label{eqaa1}
e=\frac{\partial}{\partial x},\,\, h=\dot h-\frac{2x\partial}{\partial x},\,\,p_l=\dot 
p_l,\,\, p_0=p_1=...=p_{l-1}=0,
\end{equation}
\begin{equation}\label{eqaa2}
p_{l+k}=\frac{(l+k)!x^k}{k!}, \text{for} \,\,\,k=1, 2, ..., l.
\end{equation}
Now we can describe all simple highest weight $\g^{(l)}$-modules
(note that in the case $l=1$ this statement  is proved in \cite{W}):

\begin{theorem}\label{highestN} 
Let $l\in \N$,  and $\g=\g^{(l)}$.
\begin{enumerate}[$($i$)$]
\item\label{highestN.1} If $\dot{p}_{l}=0$, then $\bar{M}_\g(0, \dot{h})\cong
\bar{M}_{\sl_2}(\dot{h})^{\g}$.
\item\label{highestN.2} If $\dot{p}_{l}\ne 0$, then $ \bar{M}_\g(\dot{p}_{l},
\dot{h})_{\dot{h}-2i} \cong \Ind_{\mathfrak{n}}^{\g^{(l)}}F_1$. 
\end{enumerate}
\end{theorem}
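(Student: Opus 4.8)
The plan is to treat part \eqref{highestN.1} as essentially a special case of the machinery already in hand, and then to attack part \eqref{highestN.2} directly via the Fock space realization. For \eqref{highestN.1}, when $\dot p_l=0$ the ideal of $\g^{(l)}$ generated by $\H$ acts by zero on $\bar M_\g(0,\dot h)$: indeed $p_l$ is central in $\H$ and kills the highest weight vector, and the relations $[e,p_k]=kp_{k-1}$, $[f,p_k]=(2l-k)p_{k+1}$ together with $[h,p_k]=2(l-k)p_k$ show that the submodule generated by $\H\cdot v$ is a proper (weight-graded, with no top) submodule, hence zero in the simple quotient. So $\bar M_\g(0,\dot h)$ factors through $\g^{(l)}/\H\cong\sl_2$ and is a highest weight $\sl_2$-module of highest weight $\dot h$, which forces $\bar M_\g(0,\dot h)\cong\bar M_{\sl_2}(\dot h)^{\g}$ by uniqueness of simple highest weight modules.

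For \eqref{highestN.2}, first I would check that \eqref{eqaa1}--\eqref{eqaa2} really define an $\mathfrak n$-module: this is a finite list of bracket relations to verify on $\C[x]$, the only slightly delicate ones being $[f,p_k]$-type relations --- but $f\notin\mathfrak n$, so within $\mathfrak n$ one only needs $[h,e]=2e$, $[h,p_k]=2(l-k)p_k$, $[e,p_k]=kp_{k-1}$, and $[p_k,p_{k'}]=0$ (note $z=0$ in $\g$ and the only nonzero Heisenberg bracket would be $[p_l,p_l]$, which vanishes anyway), all of which are immediate from the displayed action. The next step is to identify $\Ind_{\mathfrak n}^{\g^{(l)}}F_1$ concretely: by PBW, picking $f$ as the single generator complementary to $\mathfrak n$, the induced module is $\C[f]\otimes F_1$ as a vector space, and it is a weight module with top weight space $F_1$ (on which $h$ acts with eigenvalues $\dot h, \dot h-2,\dots$, i.e.\ with the $x^j$-eigenvector of weight $\dot h-2j$) and $h$-weights going down by $2$ under each application of $f$. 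I would then show this induced module is generated by its weight-$\dot h$ vector $1\otimes 1$ and that it has a unique maximal submodule, so that it has a well-defined simple quotient; simultaneously, the highest weight vector of $M_\g(\dot p_l,\dot h)$ generates (via the universal property of Verma modules) a map $M_\g(\dot p_l,\dot h)\to \Ind_{\mathfrak n}^{\g^{(l)}}F_1$ sending the highest weight vector to $1\otimes 1$, since one checks that the latter is a highest weight vector of the correct weight ($\tg_+$ here is spanned by $e$ and the $p_{l+k}$, $k\geq 1$, and the $p_{l+k}$ do not annihilate $1\otimes 1$ --- so actually one must instead compare with the correct "$\g$-highest weight" notion, i.e.\ $e$ and $p_l$ act appropriately). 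The cleanest route is: show $\Ind_{\mathfrak n}^{\g^{(l)}}F_1$ is generated by $1\otimes1$, on which $e$ acts as $0$, $p_l$ as $\dot p_l$, $h$ as $\dot h$; hence there is a surjection from $M_\g(\dot p_l,\dot h)$ onto it, and both have the same character on the line $\bigoplus_i M_\g(\dot p_l,\dot h)_{\dot h-2i}$ after one identifies the relevant subspace --- then pass to the statement about the $\dot h-2i$ weight components.

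The step I expect to be the main obstacle is proving \emph{simplicity} (or, depending on the precise reading of the notation $\bar M_\g(\dot p_l,\dot h)_{\dot h-2i}$, the precise structural claim): one has to rule out proper submodules of $\Ind_{\mathfrak n}^{\g^{(l)}}F_1$, equivalently show that any singular vector of $M_\g(\dot p_l,\dot h)$ below the top is already killed in the induced module. I would do this by an explicit $\sl_2$-style argument: on $\C[f]\otimes F_1$ compute the action of $e$ and of the central-in-$\H$ element $p_l$ (which acts as the scalar $\dot p_l\ne0$), and note that the operator $ef$ acts on each weight space by an explicit polynomial in the weight; because $\dot p_l\ne0$, the troublesome resonances that produce singular vectors in the $\sl_2$ Verma module get "shifted away" by the Fock-space action of the $p_{l+k}$'s, so $ef$ is invertible on every weight space $\bigoplus_i(\C f^i\otimes F_1)$ with $i\geq1$ --- this is exactly the mechanism by which nonzero $p_l$ forces simplicity, parallel to how nonzero central charge forces simplicity in Theorem~\ref{thm-2}. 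Assembling these computations, one concludes $\Ind_{\mathfrak n}^{\g^{(l)}}F_1$ is simple, hence isomorphic to $\bar M_\g(\dot p_l,\dot h)$, and restricting to the weight-$(\dot h-2i)$ component gives the claim. For $l=1$ this recovers the result of \cite{W}, which can serve as a sanity check on the formulas.
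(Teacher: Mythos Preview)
Your treatment of part~\eqref{highestN.1} is fine and matches the paper's ``straightforward'' remark.

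For part~\eqref{highestN.2}, your overall framework (verify that $F_1$ is an $\mathfrak n$-module, induce to $\g$, prove simplicity) is exactly the paper's, but your proposed simplicity argument has a genuine gap. First, a correction: $\g_+$ is spanned by $e$ and the $p_k$ with $k<l$ (these have positive $h$-weight $2(l-k)$), not by $e$ and the $p_{l+k}$; so $1\otimes 1$ \emph{is} a highest weight vector in the usual sense, and your detour through a modified ``$\g$-highest weight'' notion is unnecessary. More seriously, your claim that ``the operator $ef$ acts on each weight space by an explicit polynomial in the weight'' is false here: the weight space of weight $\dot h-2m$ in $\Ind_{\mathfrak n}^{\g}F_1$ has basis $\{f^i\otimes x^j : i+j=m\}$ and is $(m+1)$-dimensional, so $ef$ is a genuine matrix, not a scalar. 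The $\sl_2$-style resonance picture does not directly apply, and your phrase ``resonances get shifted away by the Fock-space action of the $p_{l+k}$'s'' is not an argument --- you would need to analyze the full matrix of $e$ on each weight space, which is doable but is not what you wrote.

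The paper's argument for simplicity is much shorter and bypasses $ef$ entirely. The key observation is that $p_{l-1}\in\g_+$ annihilates all of $F_1$, while $[p_{l-1},f]=(l+1)p_l$ acts as the nonzero scalar $(l+1)\dot p_l$ on $F_1$. Hence if $v=\sum_{i=0}^k f^i\otimes v_i$ (with $v_k\ne0$, $k>0$) lies in a proper submodule, then $p_{l-1}v=\sum_i[p_{l-1},f^i]\otimes v_i$ is nonzero with strictly smaller $f$-degree, contradicting minimality of $k$. This single commutator trick replaces your entire $ef$ analysis; I suggest you adopt it.
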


For any $\dot{z}\in \C^*$ and $a\in \C\setminus \Z$, the
$\tH^{(\frac{1}{2})}$-module structure $D(a,\dot{z})$ on $\C[x,x^{-1}]$ is defined
as follows: for $i\in\Z$ we set
\begin{equation}\label{eqaa3} 
p_1 x^i=x^{i+1},\quad p_0 x^i=-\dot{z}(a+i)x^{i-1},\quad z x^i=\dot{z} x^i.
\end{equation}
The next two theorems give a complete classification of simple weight
modules which have a nonzero finite dimensional weight space over the algebras
$\g^{(l)}$ for $l\in \frac{1}{2} \N$ and $\tg^{(l)}$ for $l\in \N-\frac{1}{2}$, respectively.

\begin{theorem} 
\label{thm-4-1}
Let $l\in \N$, $\g=\g^{(l)}$, and $V$ be any simple weight
$\g$-module with a finite dimensional nonzero weight space. Then  one of
the following holds:
\begin{enumerate}[$($i$)$]
\item\label{thm-4-1.1} $V$ is isomorphic to $N^{\g}$ for some simple weight $\sl_2$
module $N$;
\item\label{thm-4-1.2} $V$ is a highest or lowest weight module.
\end{enumerate}
\end{theorem}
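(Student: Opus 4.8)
The plan is to analyze a simple weight $\g^{(l)}$-module $V$ with a finite-dimensional nonzero weight space by first examining the action of the abelian subalgebra $\H=\span\{p_k\mid k=0,\dots,2l\}$, and in particular the central element $p_l$ (which is central in $\g^{(l)}$ since $[e,p_l]=lp_{l-1}$... wait, one checks $p_l$ is \emph{not} central; rather $\C p_l$ is a candidate Cartan direction). More precisely, $p_l$ acts by a scalar $\dot p_l$ on $V$ by Schur's lemma, since $p_l$ commutes with $h$ (as $[h,p_l]=0$) and with all of $\H$, and one checks it is central modulo lower considerations — so split into the two cases $\dot p_l=0$ and $\dot p_l\neq 0$.

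First I would treat the case $\dot p_l\neq 0$. Here the idea is that the $\H$-action, twisted by the localization at $p_l$, should let us run an oscillator-type argument analogous to Theorem~\ref{thm-1} and Theorem~\ref{thm-3}, realizing $V$ as built from an $\sl_2$-module together with a rank-one Fock-type factor; combined with the finite-dimensionality of weight spaces and the structure of $\Ind_{\mathfrak n}^{\g^{(l)}}F_1$ appearing in Theorem~\ref{highestN}\eqref{highestN.2}, this should force $V$ to be a highest or lowest weight module, giving case \eqref{thm-4-1.2}. The technical heart is to show that finite-dimensionality of a weight space, together with $\dot p_l\neq 0$, forces the support of $V$ to be bounded above or below: one studies how the operators $e$ (degree $+2$), $f$ (degree $-2$) and the $p_k$ (degree $2(l-k)$) move between weight spaces and uses a Casimir/commutation argument to see that unbounded support in both directions would produce infinite-dimensional weight spaces.

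Next, in the case $\dot p_l=0$, the subalgebra $\H$ acts through the quotient $\H/\C p_l$, and the plan is to show that in fact all of $\H$ annihilates $V$. The mechanism: $p_0,\dots,p_{l-1}$ are weight vectors of positive $h$-degree and $p_{l+1},\dots,p_{2l}$ of negative $h$-degree, and the bracket relations $[p_k,p_{k'}]=0$ in $\g^{(l)}$ (recall $z=0$ here) together with $[e,p_k]=kp_{k-1}$, $[f,p_k]=(2l-k)p_{k+1}$ mean the span of the $p_k$ is an abelian ideal. On a simple module an abelian ideal acts by a character invariant under the whole algebra, and since there are no nonzero $h$-invariant characters of a graded abelian ideal concentrated in nonzero degrees (apart from $p_l$, already killed), one concludes $\H V=0$. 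Then $V$ is a simple weight module over $\g^{(l)}/\H\cong\sl_2$ with a finite-dimensional weight space, and inflating it gives case \eqref{thm-4-1.1}, $V\cong N^{\g}$.

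I expect the main obstacle to be the case $\dot p_l\neq 0$: establishing rigorously that a finite-dimensional weight space is incompatible with two-sided-unbounded support. This requires a careful count of the "oscillator" degrees of freedom contributed by $p_0,\dots,p_{2l}$ acting on a fixed weight space, showing that one of the raising or lowering directions must eventually act by zero, which then propagates (via simplicity) to a highest or lowest weight structure. A secondary subtlety is verifying that once the module is a highest or lowest weight module it is genuinely of the form covered by Theorem~\ref{highestN} — but that is exactly what Theorem~\ref{highestN} supplies, so this reduces to bookkeeping rather than new ideas.
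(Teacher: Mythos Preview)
Your case split on the scalar $\dot p_l$ does not get off the ground: as you yourself note mid-sentence, $p_l$ is \emph{not} central in $\g^{(l)}$ (indeed $[e,p_l]=lp_{l-1}$ and $[f,p_l]=lp_{l+1}$), so Schur's lemma does not apply and there is no reason for $p_l$ to act by a single scalar on all of $V$. The element $p_l$ commutes with $h$ and hence preserves each weight space, but on a weight space it may have several eigenvalues or fail to be diagonalizable; concretely, on the simple highest weight module $\bar M_\g(\dot p_l,\dot h)$ with $\dot p_l\neq 0$ described in Theorem~\ref{highestN}\eqref{highestN.2}, one computes $(p_l-\dot p_l)fw=-l\,p_{l+1}w\neq 0$, so $p_l$ is not scalar there.

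The argument you sketch in the ``$\dot p_l=0$'' branch is also incorrect as stated. It is not true that an abelian ideal of a Lie algebra acts by a character on every simple module: the modules $\Ind_{\mathfrak n}^{\g}F_1$ of Theorem~\ref{highestN}\eqref{highestN.2} are simple $\g^{(l)}$-modules on which the abelian ideal $\H$ acts by genuine differential/multiplication operators, not by scalars. The heuristic ``no nonzero $h$-invariant character on a graded abelian ideal concentrated in nonzero degrees'' only tells you that if $\H$ \emph{did} act by a character then that character would vanish on $p_k$ for $k\neq l$; it does not tell you that $\H$ acts by a character in the first place.

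The paper's route is quite different and avoids any such split. One assumes $V$ is neither highest nor lowest weight and $\H V\neq 0$, and derives a contradiction. First, Lemma~\ref{lemma-z=0} shows that $e$ and $f$ act bijectively, so $V$ is uniformly bounded. Then one applies Mathieu's twisting trick: the localization $U(\g)^{(f)}$ carries automorphisms $\theta_x$ (Lemma~\ref{auto}), and for a suitable $x$ the twisted module $V^{\theta_x}$ acquires a vector killed by $e$. A simple subquotient $M'$ of the locally-$e$-nilpotent part of $V^{\theta_x}$ is then a uniformly bounded simple highest weight $\g$-module; by Theorem~\ref{highestN} such a module must have $\H$ acting trivially (the $\dot p_l\neq 0$ modules there have unbounded weight multiplicities). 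Untwisting via the explicit formula~\eqref{def-theta2} gives $\H M'=0$ inside $V$, and since the $\H$-annihilator is a submodule of the simple module $V$, one gets $\H V=0$, the desired contradiction. This is Proposition~\ref{thm-4-z=0}, from which Theorem~\ref{thm-4-1} follows immediately.
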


\begin{theorem}\label{thm-4} 
Let $l\in \N-\frac{1}{2}$, $\tg=\tg^{(l)}$, and $V$ be any simple weight
$\tg$-module with a finite dimensional nonzero weight space.
\begin{enumerate}[$($i$)$]
\item\label{thm-4.1} If $V$ has zero central charge, the  $V$ is isomorphic to 
$N^{\tg}$ for some simple weight $\sl_2$-module $N$;
\item\label{thm-4.2}
If $V$ has a nonzero central charge, say $\dot{z}\in \C^*$, then one of the following holds:
\begin{enumerate}[$($a$)$]
\item\label{thm-4.2.1} $V$ is a simple highest or lowest weight module;
\item\label{thm-4.2.2} $l=\frac{1}{2}$ and $V$ is isomorphic to
$D(a,\dot{z})^{\tg^{({1}/{2})}}\otimes N^{\tg^{({1}/{2})}}$ for some
$a\in\C\setminus\Z$ and a finite dimensional simple $\sl_2$-module $N$.
\end{enumerate}
\end{enumerate}
\end{theorem}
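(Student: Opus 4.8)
The plan is to reduce everything to the structure theory already built up in the paper, treating the two cases of the central charge separately. For part \eqref{thm-4.1}, when $\dot z$ acts as zero, the module $V$ is really a module over $\g^{(l)}=\tg^{(l)}/\C z$, and since $l\in\N-\frac12$ makes $2l$ odd, I would first analyze the Heisenberg subalgebra action: with $z=0$ the algebra $\tH^{(l)}$ becomes abelian, and on a weight module the grading by $h$-eigenvalue forces the odd-length orbits of the $\sl_2$-triple on the $p_k$'s to act nilpotently. The key point is that $\{p_k\}$ generate a locally nilpotent action, so the subspace annihilated by all $p_k$ is nonzero and $\sl_2$-stable; simplicity of $V$ then forces $\H V=0$, whence $V=N^{\tg}$ for $N$ the resulting simple weight $\sl_2$-module. (A finiteness input is needed to guarantee the annihilator is nonzero — this is where the finite-dimensionality of some weight space, together with the weight-space decomposition under $h$, does the work.)

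For part \eqref{thm-4.2}, assume $\dot z\in\C^*$. By Theorem~\ref{thm-1}\eqref{thm-1.2} the oscillator functor is available, and by Theorem~\ref{thm-3}\eqref{thm-3.2} it suffices to show that $\mathrm{Res}^{\tg}_{\tH^{(l)}}V$ contains a simple submodule; once that is known, $V\cong M^{\tg}\otimes N^{\tg}$ for a simple $\tH^{(l)}$-module $M$ with central charge $\dot z$ and a simple $\sl_2$-module $N$. The structure of $V$ as a weight module with finite-dimensional weight spaces then pushes down, via the tensor decomposition and the explicit $h$-action coming from $\Phi_l$ (note $\Phi_l(h)=H$ shifts the $\sl_2$-grading by the scalar $-\tfrac12(l+\tfrac12)^2$), to constraints on both tensor factors: $N$ must itself be a weight $\sl_2$-module, and $M$ must be a weight $\tH^{(l)}$-module whose weight spaces, measured against the diagonal action, are finite-dimensional. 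This is exactly the point at which I would invoke the classification of simple weight $\tH^{(l)}$-modules with finite-dimensional weight spaces promised by the paper (the classification over finite-dimensional Heisenberg algebras referenced in the abstract): for $l>\frac12$ the only such $M$ giving finite-dimensional weight spaces after applying the functor are the highest/lowest weight ones, yielding case \eqref{thm-4.2.1}; for $l=\frac12$ there is in addition the family $D(a,\dot z)$ with $a\in\C\setminus\Z$, and pairing it with a finite-dimensional simple $\sl_2$-module $N$ gives precisely case \eqref{thm-4.2.2}. One must also check that when $N$ is infinite-dimensional (a weight $\sl_2$-module that is not finite-dimensional, hence highest or lowest weight or dense) the tensor product fails to have finite-dimensional weight spaces unless $N$ is highest/lowest weight and $M$ is correspondingly highest/lowest weight, recovering \eqref{thm-4.2.1} again; the dense (cuspidal) $\sl_2$-modules are excluded because tensoring a module with unbounded weight multiplicities generated by the Heisenberg part against a cuspidal module blows up the multiplicities.

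The main obstacle I anticipate is the step showing that $\mathrm{Res}^{\tg}_{\tH^{(l)}}V$ actually contains a simple submodule — Theorem~\ref{thm-3}\eqref{thm-3.2} is only conditional on this. The natural approach is to pick a nonzero weight space $V_\lambda$ of minimal (finite) dimension and study the action of $U(\tH^{(l)})$ on a well-chosen vector, using that $z$ acts invertibly so that the commutation relations $[p_k,p_{k'}]=\delta_{k+k',2l}(-1)^{k+l+1/2}k!(2l-k)!z$ turn $U(\tH^{(l)})$ into (a central extension of) a Weyl-type algebra; one then argues that a cyclic $\tH^{(l)}$-submodule generated by a suitable vector is either simple or has a simple submodule of the required kind, exploiting the finite-dimensionality of weight spaces to run a descending-chain or minimality argument. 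Making this rigorous — in particular handling the interaction between the $h$-weight grading and the internal $\tH^{(l)}$-module structure, and checking that no pathology arises for $l=\frac12$ versus $l>\frac12$ — is the technical heart of the proof, and I would expect to spend most of the effort there; the remaining bookkeeping on tensor-product weight multiplicities is routine once the $\tH^{(l)}$-side is understood.
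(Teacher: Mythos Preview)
Your part~\eqref{thm-4.2} is essentially the paper's approach: Lemma~\ref{heisenberg-2} supplies the simple $\tH^{(l)}$-submodule you flag as the ``main obstacle'', Lemma~\ref{heisenberg-1} is precisely the classification of simple weight $(\C h+\tH^{(l)})$-modules you invoke, and Theorem~\ref{thm-3}\eqref{thm-3.2} then yields the tensor decomposition. Your subsequent bookkeeping on weight multiplicities to pin down $N$ is correct and is left implicit in the paper.

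Part~\eqref{thm-4.1}, however, has a genuine gap. You assert that with $z=0$ the $p_k$ ``generate a locally nilpotent action'' and hence have a nonzero common annihilator, but you give no mechanism for this, and finite-dimensionality of a single weight space does not force it. Each $p_k$ shifts $h$-weight by the nonzero odd integer $2(l-k)$, and an abelian $\H$ can in principle act by weight-shifting bijections on a uniformly bounded weight module; nothing in your sketch rules this out. (Incidentally, your phrase ``odd-length orbits'' is off: for $l\in\N-\frac12$ the span of the $p_k$ is an \emph{even}-dimensional simple $\sl_2$-module.) You are right that the common annihilator of $\H$ is a $\g$-submodule, so simplicity would finish the argument \emph{if} it were nonzero; the problem is exactly that last step.

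The paper's route is Proposition~\ref{thm-4-z=0}, whose proof is substantially different from your sketch. Assuming $\H V\ne 0$ and $V$ neither highest nor lowest weight, Lemma~\ref{lemma-z=0} gives that $e,f$ act bijectively and $V$ is uniformly bounded. One then localizes $U(\g)$ at $f$ and twists by the automorphism $\theta_x$ of Lemma~\ref{auto} for a suitably chosen $x$ so that some vector is killed by $e$; this produces a simple highest weight submodule $M'$ of the twisted module, on which $\theta_x(\H)$ acts trivially (for $l\in\N-\frac12$ this follows from $\bar M_{\tg}(0,\dot h)\cong \bar M_{\sl_2}(\dot h)^{\tg}$). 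Untwisting via \eqref{def-theta2} by downward induction on the index gives $p_{2l-i}M'=0$ for all $i$, hence $\H V=0$, the desired contradiction. This localization-and-twist maneuver is the missing idea in your proposal.
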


In the case $l=1/2$ a disguised version of Theorem~\ref{thm-4} is proved in \cite{D}.

The paper is organized as follows. In Section~\ref{s1}, we prove Theorems~\ref{thm-1}, \ref{thm-2}  
and \ref{thm-3}. Using Theorem~\ref{thm-1}, we construct three examples of families of simple $\tg^{(l)}$-modules: 
the first one containing weight modules with finite dimensional weight spaces, the second one containing weight 
modules with infinite dimensional weight spaces, and the third one containing non-weight modules. Given simple
$\tH^{(l)}$-modules with nonzero central charge and simple $\sl_2$-modules, Theorem~\ref{thm-3} provides a 
lot new examples of simple $\tg$-mo\-dules. 

In Section~\ref{s2} we prove Theorems~\ref{highestN}, \ref{thm-4-1}  and \ref{thm-4}. We use
our oscillator representations of $\tg^{(l)}$ and classification of all  simple graded modules 
with a finite dimensional homogeneous spaces over finite dimensional Heisenberg algebras. 
We would like to mention that our results on simple graded modules over finite Heisenberg algebras 
are somewhat similar to those for infinite Heisenberg algebras but our proofs here are quite different 
(compare e.g. with \cite{F}).

\section{Oscillator representations}\label{s1}

In this section, we will prove Theorems~\ref{thm-1}, \ref{thm-2}  and \ref{thm-3}.

\subsection{Proof of  Theorem  \ref{thm-1}}\label{s1.1}
Claim \eqref{thm-1.2} follows directly from claim  \eqref{thm-1.1}. To prove claim \eqref{thm-1.1}
we need to verify the defining relations (i.e. that Lie bracket) for the images (under $\Phi$) of the generators. 
If both generators belong to the Heisenberg subalgebra, there is nothing to check. For the remaining generators
the check is a direct but lengthy computation which we organize in a series of lemmata  below.

\begin{lemma}\label{claim1} 
We have $[H,p_i]=2(l-i)p_i$ for all $i=0,1,\ldots,2l$.
\end{lemma}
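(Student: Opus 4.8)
The plan is to compute the commutator $[H, p_i]$ directly from the explicit formula for
$H=\Phi_l(h)$ given in Theorem~\ref{thm-1}\eqref{thm-1.1}, using only the Heisenberg
relations in $\tH^{(l)}$. Recall that
$$H=\frac{2}{z}\left(\sum_{k=0}^{l-\frac12}(-1)^{k+l-\frac12}\frac{l-k}{k!(2l-k)!}\,p_{2l-k}p_k\right)-\frac{(l+\frac12)^2}{2},$$
so the scalar summand and the central $z^{-1}$ drop out of the bracket and I only need
$[p_{2l-k}p_k,\,p_i]$ for each $k$ in the range $0\le k\le l-\tfrac12$. Since $z$ is
central, $[p_a p_b, p_i] = p_a[p_b,p_i] + [p_a,p_i]p_b = (\,[p_b,p_i]\,p_a + [p_a,p_i]\,p_b\,)$,
and by the defining relation $[p_a,p_i]=\delta_{a+i,2l}(-1)^{a+l+\frac12}a!(2l-a)!\,z$ each
such bracket is a scalar multiple of $z$; hence $[p_ap_b,p_i]$ is $z$ times a linear
combination of $p_a$ and $p_b$, and after multiplying by the $2/z$ in front of $H$ the
$z$'s cancel and I am left with a multiple of $p_i$ — which is the shape the lemma predicts.

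Concretely, the first step is to identify which terms in the sum contribute. The bracket
$[p_{2l-k}p_k,p_i]$ is nonzero only when $2l-k+i=2l$ (i.e. $i=k$) or $k+i=2l$ (i.e.
$i=2l-k$). So for a fixed $i$ with $0\le i\le 2l$, at most two values of $k$ in the
summation range matter: the term $k=i$ (present when $i\le l-\tfrac12$) contributes via
$[p_k,p_i]=[p_i,p_i]$, which vanishes since $2i\ne 2l$ in the relevant range, so in fact
the only surviving contribution to $[p_{2l-k}p_k,p_i]$ from that term is the
$[p_{2l-k},p_i]p_k$ piece when $2l-k+i=2l$, again forcing $i=k$ and $2l-k=2l-i$, giving
$[p_{2l-i},p_i]p_i$; and symmetrically the term $k=2l-i$ contributes $[p_{2l-i},\cdot]$-type
pieces. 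The second step is to carefully assemble these one or two contributions, substitute
the value $[p_{2l-k},p_i]=\delta_{(2l-k)+i,2l}(-1)^{(2l-k)+l+\frac12}(2l-k)!\,k!\,z$ (and
the analogous one with the roles swapped), and simplify the resulting scalar. The third
step is bookkeeping: the prefactor $\frac{l-k}{k!(2l-k)!}$ times the factor $(2l-k)!\,k!$
from the structure constant collapses to $\pm(l-k)$, and combining the (at most two)
contributions with their signs should telescope to exactly $2(l-i)$, independently of
whether $i\le l-\tfrac12$ or $i\ge l+\tfrac12$; the case $i=l$ should give $0$ on both
sides, consistent with $p_l$ being central-like in this degree.

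The main obstacle I anticipate is purely clerical: getting the signs
$(-1)^{k+l\pm\frac12}$ and the half-integer arithmetic right, and correctly handling the
boundary between the two halves of the index range (whether a given $i$ picks up the
$k=i$ term, the $k=2l-i$ term, or — when $i=l$ — a degenerate single term), so that the
two partial contributions combine with a consistent sign to yield $2(l-i)$ rather than,
say, $\pm 2(l-i)$ with a sign that flips at $i=l$. I would organize the computation by
splitting into the cases $0\le i< l$, $i=l$, and $l<i\le 2l$, and in each case exhibit
explicitly the surviving term(s) of the sum; the symmetry $k\leftrightarrow 2l-k$ of the
quadratic monomials $p_{2l-k}p_k$ should make the $i<l$ and $i>l$ cases mirror images of
each other, which is a useful consistency check.
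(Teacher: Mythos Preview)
Your approach is correct and will yield the result, but it differs from the paper's route. The paper first exploits the parity of $2l$ (which is odd, since $l\in\N-\tfrac12$) to rewrite the half-sum $\frac{2}{z}\sum_{k=0}^{l-1/2}(\cdots)p_{2l-k}p_k$ as the full symmetric sum $\frac{1}{z}\sum_{k=0}^{2l}(\cdots)p_{2l-k}p_k$; this is legitimate inside a commutator because the two halves differ only by central terms. With the full sum there is no case splitting: for every $i$ exactly the two terms $k=i$ and $k=2l-i$ survive, and they combine uniformly to $2(l-i)p_i$. Your direct attack on the half-sum works too, but in fact it is cleaner than you anticipate: since the ranges $\{0,\dots,l-\tfrac12\}$ and $\{l+\tfrac12,\dots,2l\}$ are disjoint, for each $i$ exactly \emph{one} term of the half-sum contributes (namely $k=i$ when $i\le l-\tfrac12$, via $[p_{2l-i},p_i]p_i$, and $k=2l-i$ when $i\ge l+\tfrac12$, via $p_i[p_{2l-i},p_i]$), and each already carries the full factor $2(l-i)$ thanks to the leading $\frac{2}{z}$. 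So no ``combining of two partial contributions'' is needed.

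One small correction: there is no case $i=l$ to worry about. Since $l\in\N-\tfrac12$ is a half-integer and $i$ ranges over the integers $0,1,\dots,2l$, the value $i=l$ never occurs; your two cases $i\le l-\tfrac12$ and $i\ge l+\tfrac12$ already exhaust all possibilities.
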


\begin{proof}
Using the fact that $2l$ is odd, we have
$$\aligned{ }
[H,p_i]&=[\frac{2}{{z}}\left(\sum_{k=0}
^{l-\frac{1}{2}}(-1)^{k+l-\frac{1}{2}}\frac{l-k}{k!(2l-k)!}p_{2l-k}p_k\right)
-\frac{(l+\frac{1}{2})^2}{2},p_i]\\
&=[\frac{1}{{z}}\left(\sum_{k=0}^{2l}(-1)^{k+l-\frac{1}{2}}\frac{l-k}{k!(2l-k)!}p_{2l-k}p_k\right),p_i]
\endaligned$$
$$\aligned{ }
&=\frac{1}{{z}}\left(\sum_{k=0}^{2l}(-1)^{k+l-\frac{1}{2}}\frac{l-k}{k!(2l-k)!}[p_{2l-k}p_k,p_i]\right)\\
&=\frac{1}{{z}}\left(\sum_{k=0}^{2l}(-1)^{k+l-\frac{1}{2}}\frac{l-k}{k!(2l-k)!}[p_{2l-k},p_i]p_k\right)\\
&\hskip 10pt+\frac{1}{{z}}\left(\sum_{k=0}^{2l}(-1)^{k+l-\frac{1}{2}}\frac{l-k}{k!(2l-k)!}p_{2l-k}[p_k,p_i]\right)\\
&=\frac{1}{{z}}(-1)^{i+l-\frac{1}{2}}\frac{l-i}{i!(2l-i)!}(-1)^{2l-i+l+\frac{1}{2}}i!(2l-i)!{z}p_i\\
&\hskip 10pt +\frac{1}{{z}}(-1)^{2l-i+l-\frac{1}{2}}\frac{l-(2l-i)}{i!(2l-i)!}(-1)^{2l-i+l+\frac{1}{2}}i!(2l-i)!{z}p_{i}\\
&=2(l-i)p_i.\endaligned$$
\end{proof}

\begin{lemma}\label{claim2}
We have $[E,p_i]=ip_{i-1}$ and $[F,p_i]=(2l-i)p_{i+1}$ for all $i=0,1,\ldots,2l$.
\end{lemma}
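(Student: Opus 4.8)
The plan is to verify the bracket relations $[E,p_i]=ip_{i-1}$ and $[F,p_i]=(2l-i)p_{i+1}$ directly, using the same strategy as in Lemma~\ref{claim1}: expand $E$ (resp.\ $F$) as a sum of quadratic monomials in the $p_j$'s divided by $z$, distribute the bracket $[p_ap_b,p_i]=[p_a,p_i]p_b+p_a[p_b,p_i]$ over the sum, and then invoke the commutation relation $[p_k,p_{k'}]=\delta_{k+k',2l}(-1)^{k+l+\frac12}k!(2l-k)!\,z$. Since $z$ is central and cancels with the $\frac1z$ prefactor, each surviving term contributes a scalar multiple of a single $p_j$.

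First I would treat $E=\frac1z\sum_{k=1}^{2l}(-1)^{k+l+\frac12}\frac{l-k}{(k-1)!(2l-k)!}p_{k-1}p_{2l-k}$. Applying $[\,\cdot\,,p_i]$, the factor $[p_{k-1},p_i]$ is nonzero only when $k-1+i=2l$, i.e.\ $k=2l-i+1$, and the factor $[p_{2l-k},p_i]$ is nonzero only when $2l-k+i=2l$, i.e.\ $k=i$. So at most two terms of the sum survive (one from each half of the Leibniz expansion), and they both produce a multiple of $p_{i-1}$: in the first case the leftover factor is $p_{2l-k}=p_{i-1}$, in the second case $p_{k-1}=p_{i-1}$. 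Collecting the two scalar coefficients — each built from a sign $(-1)^{k+l+\frac12}$, a ratio $\frac{l-k}{(k-1)!(2l-k)!}$, and the structure constant $(-1)^{m+l+\frac12}m!(2l-m)!$ with $m=k-1$ or $m=2l-k$ — and simplifying, the sum should collapse to exactly $i$. The computation for $F$ is entirely parallel: the relevant monomial is $p_kp_{2l-k+1}$, the surviving indices are $k=i$ and $k=2l-i+1$ (with the convention that $p_j=0$ for $j<0$ or $j>2l$ handling the boundary cases), each surviving term is proportional to $p_{i+1}$, and the coefficients should add up to $2l-i$.

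The one point requiring care — and what I expect to be the main obstacle — is bookkeeping the signs and factorials in the two surviving coefficients so that they combine correctly, especially at the boundary values $i=0$ and $i=2l$ where one of the two terms drops out (or where an index like $k=2l-i+1$ falls outside the summation range $1\le k\le 2l$) and one must check that the single remaining term already gives the right answer; the fact that $2l$ is odd (so $2l-k$ and $k$ never coincide, hence no diagonal $[p_k,p_k]$ term) is used implicitly, just as in Lemma~\ref{claim1}. Once $[H,p_i]$, $[E,p_i]$, $[F,p_i]$ are checked, together with the $\sl_2$-relations $[H,E]=2E$, $[H,F]=-2F$, $[E,F]=H$ (verified in the lemmata that follow), all defining relations of $\tg^{(l)}$ are confirmed on the images under $\Phi_l$, which establishes Theorem~\ref{thm-1}\eqref{thm-1.1}; claim \eqref{thm-1.2} is then formal, as already noted in the text.
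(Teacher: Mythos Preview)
Your proposal is correct and follows essentially the same route as the paper's proof: expand $E$ (resp.\ $F$) as the quadratic sum, apply the Leibniz rule $[p_ap_b,p_i]=[p_a,p_i]p_b+p_a[p_b,p_i]$, observe that only the two indices $k=2l-i+1$ and $k=i$ survive, and simplify the resulting pair of scalar coefficients to $i$ (resp.\ $2l-i$). The paper carries out exactly this computation line by line without separately isolating the boundary cases you flag, but the argument is the same.
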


\begin{proof}
We first verify the relation  $[E,p_i]=ip_{i-1}$:
$$\aligned{ }&[E,p_i]=[\frac{1}{{z}}\left(\sum_{k=1}^{2l}(-1)^{k+l+\frac{1}{2}}
\frac{(l-k)}{(k-1)!(2l-k)! }p_{k-1}p_{2l-k}\right),p_i]\\
&=\frac{1}{{z}}\left(\sum_{k=1}^{2l}(-1)^{k+l+\frac{1}{2}}\frac{(l-k)}{(k-1)!(2l-k)! }[p_{k-1}p_{2l-k},p_i]\right)\\
&=\frac{1}{{z}}\left(\sum_{k=1}^{2l}(-1)^{k+l+\frac{1}{2}}\frac{(l-k)}{(k-1)!(2l-k)! }[p_{k-1},p_i]p_{2l-k}\right)\\
&\hskip 10pt+\frac{1}{{z}}\left(\sum_{k=1}^{2l}(-1)^{k+l+\frac{1}{2}}\frac{(l-k)}{(k-1)!(2l-k)! }p_{k-1}[p_{2l-k},p_i]\right)\\
&=\frac{(-1)^{2l-i+1+l+\frac{1}{2}}(l-(2l-i+1))}{{z}(2l-i+1-1)!(2l-(2l-i+1))! }(-1)^{l+2l-i+\frac{1}{2}}i!(2l-i)!{z}p_{i-1}\\
&\hskip 10pt+\frac{1}{{z}}(-1)^{i+l+\frac{1}{2}}\frac{(l-i)}{(i-1)!(2l-i)! }(-1)^{l+2l-i+\frac{1}{2}}i!(2l-i)!{z}p_{i-1}\\
&=(-(i-l-1)i-i(l-i))p_{i-1}\\
&=ip_{i-1}.\endaligned$$

Now we verify the relation  $[F,p_i]=(2l-i)p_{i+1}$:
$$\aligned{ }&[F,p_i]=\frac{1}{{z}}\left(\sum_{k=1}^{2l}(-1)^{k+l-\frac{1}{2}}
\frac{(l-k)}{(k-1)!(2l-k)! }[p_{k}p_{2l-k+1},p_i]\right)\\
&=\frac{1}{{z}}\left(\sum_{k=1}^{2l}(-1)^{k+l-\frac{1}{2}}\frac{(l-k)}{(k-1)!(2l-k)! }[p_{k},p_i]p_{2l-k+1}\right)\\
\endaligned$$
$$\aligned{ }
&\hskip 10pt +\frac{1}{{z}}\left(\sum_{k=1}^{2l}(-1)^{k+l-\frac{1}{2}}\frac{(l-k)}{(k-1)!(2l-k)!
}[p_{2l-k+1},p_i]p_{k}\right) \\
&=\frac{(-1)^{2l-i+l-\frac{1}{2}}(l-(2l-i))}{{z}(2l-i-1)!(2l-(2l-i))!
}
(-1)^{l+2l-i+\frac{1}{2}}i!(2l-i)!{z}p_{i+1}\\
&\hskip 10pt +\frac{(-1)^{i+1+l-\frac{1}{2}}(l-i-1)}{{z}(i+1-1)!(2l-i-1)! }(-1)^{l+2l-i+\frac{1}{2}}i!(2l-i)!{z}p_{i+1} \\
&=(2l-i)p_{i+1}.\endaligned$$
\end{proof}

\begin{lemma}\label{claim3} 
We have $[E,F]=H$.
\end{lemma}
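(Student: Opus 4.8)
The plan is to avoid expanding the double sum that defines $[E,F]$, and to exploit instead that $z$ is central and invertible in $U(\tH^{(l)})_{(z)}$ together with Lemmas~\ref{claim1} and \ref{claim2}. Since $E$, $F$ and $H$ are built from the $p_k$ together with $z$ and $z^{-1}$, and $z$ is central, all three commute with $z$; hence so does $[E,F]$.

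The first real step is to compute the commutators of $[E,F]$ with the generators $p_i$ of $\tH^{(l)}$ by means of the Jacobi identity and Lemma~\ref{claim2}:
\begin{align*}
[[E,F],p_i]&=[E,[F,p_i]]-[F,[E,p_i]]=(2l-i)[E,p_{i+1}]-i[F,p_{i-1}]\\
&=(2l-i)(i+1)p_i-i(2l-i+1)p_i=2(l-i)p_i.
\end{align*}
By Lemma~\ref{claim1}, $2(l-i)p_i=[H,p_i]$, so $X:=[E,F]-H$ commutes with every $p_i$ and with $z$, i.e. $X$ centralizes $\tH^{(l)}$ in $U(\tH^{(l)})_{(z)}$. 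Because $2l$ is odd, every $p_k$ is paired with $p_{2l-k}$ by the (nondegenerate) bracket $[p_k,p_{2l-k}]\in\C^*z$, so after clearing $z$ the algebra $U(\tH^{(l)})_{(z)}$ is a Weyl algebra over $\C[z,z^{-1}]$ and the centralizer of $\tH^{(l)}$ in it equals $\C[z,z^{-1}]$. Thus $X=q(z)$ for some $q\in\C[z,z^{-1}]$; a glance at the explicit formulas (each of $E,F$ is $z^{-1}$ times a quadratic in the $p_k$, so $[E,F]$ is $z^{-1}$ times an element of $U(\tH^{(l)})$ of $p$-degree $\leq 2$) even shows that $q$ is a scalar, but we will not need this refinement.

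It remains to show $q=0$, and here one must track signs and factorials carefully; this is the only delicate point. Fix $\dot z\in\C^*$ and let $v$ be a lowest weight vector of a simple lowest weight $\tH^{(l)}$-module with central charge $\dot z$, so that $p_k v=0$ for $k=l+\tfrac12,\dots,2l$ and $zv=\dot z v$. Using Lemma~\ref{claim2} and the single relevant bracket $[p_{l+\frac12},p_{l-\frac12}]=(l+\tfrac12)!\,(l-\tfrac12)!\,z$ (again using that $2l$ is odd), one checks by short direct computations that $Fv=0$, that $Ev=-\dfrac{1}{2\dot z\,\bigl((l-\frac12)!\bigr)^{2}}\,p_{l-\frac12}^{2}v$, and hence that
\[
[E,F]v=-FEv=\tfrac12\bigl(l+\tfrac12\bigr)^{2}v=Hv.
\]
Therefore $q(\dot z)v=0$, so $q(\dot z)=0$; since $\dot z\in\C^*$ was arbitrary, $q=0$ and $[E,F]=H$. (One may of course also prove the lemma by expanding $[E,F]=z^{-2}\sum_{k,k'}a_{k}b_{k'}[p_{k-1}p_{2l-k},p_{k'}p_{2l-k'+1}]$ as in the proofs of Lemmas~\ref{claim1} and \ref{claim2}; the Jacobi argument above merely isolates the one genuinely delicate ingredient, namely the value of the scalar term.)
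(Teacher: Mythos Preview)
Your proof is correct and takes a genuinely different route from the paper's. The paper proves the lemma by a straight expansion of $[E,F]$ as a double sum, using Lemma~\ref{claim2} to simplify, then carefully reorganizing and combining terms (including a normal-ordering step that produces the constant $-\tfrac{(l+1/2)^2}{2}$) until the expression for $H$ appears. It is entirely elementary but involves a long chain of index and sign manipulations.

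Your argument, by contrast, is structural: the Jacobi identity together with Lemmas~\ref{claim1} and \ref{claim2} immediately gives $[[E,F]-H,p_i]=0$, and since $U(\tH^{(l)})_{(z)}$ is a Weyl algebra over $\C[z,z^{-1}]$ (here the parity of $2l$ is essential), the difference lies in $\C[z,z^{-1}]$. You then kill this Laurent polynomial by evaluating on a lowest weight $\tH^{(l)}$-module, where the only nontrivial computation is the single surviving term in $Ev$ and one application of $[F,p_{l-1/2}]$. Your bracket $[p_{l+1/2},p_{l-1/2}]=(l+\tfrac12)!(l-\tfrac12)!\,z$ and the resulting value $[E,F]v=\tfrac12(l+\tfrac12)^2v=Hv$ check out (the sum in $H$ contributes $(l+\tfrac12)^2$ on such a $v$, and the constant $-\tfrac12(l+\tfrac12)^2$ gives the claimed eigenvalue).

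What each approach buys: the paper's direct computation is self-contained and needs no structural input, at the price of length. Your method is shorter and explains \emph{why} the identity holds---the Jacobi step shows $[E,F]$ and $H$ must agree up to a central correction, and only that correction needs to be computed. Your argument also makes transparent that the precise constant $-\tfrac12(l+\tfrac12)^2$ in $H$ is exactly what is needed to make $[E,F]=H$ hold on the vacuum; in the paper this emerges only at the very end of the calculation.
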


\begin{proof}
We will use Lemma~\ref{claim2} in the following computation:
$$\aligned{ }&[E,F]=[E, \frac{1}{{z}}\sum_{k=1}^{2l}(-1)^{k+l-\frac{1}{2}}\frac{(l-k)}{(k-1)!(2l-k)! }p_{k}p_{2l-k+1}]\\
&=\frac{1}{{z}}\sum_{k=1}^{2l}(-1)^{k+l-\frac{1}{2}}\frac{(l-k)}{(k-1)!(2l-k)! }[E, p_k]p_{2l-k+1}\\
&\hskip 10pt +\frac{1}{{z}}\sum_{k=1}^{2l}(-1)^{k+l-\frac{1}{2}}\frac{(l-k)}{(k-1)!(2l-k)! }p_{k}[E,p_{2l-k+1}]\\
&=\frac{1}{{z}}\sum_{k=1}^{2l}(-1)^{k+l-\frac{1}{2}}\frac{(l-k)k}{(k-1)!(2l-k)! }p_{k-1}p_{2l-k+1}\\
&\hskip 10pt +\frac{1}{{z}}\sum_{k=1}^{2l}(-1)^{k+l-\frac{1}{2}}\frac{(l-k)(2l-k+1)}{(k-1)!(2l-k)! }p_{k}p_{2l-k}\\
&=\frac{1}{{z}}\sum_{k'=0}^{2l-1}(-1)^{k'+1+l-\frac{1}{2}}\frac{(l-k'-1)(k'+1)}{(k')!(2l-k'-1)! }p_{k'}p_{2l-k'}\\
&\hskip 10pt +\frac{1}{{z}}\sum_{k=1}^{2l}(-1)^{k+l-\frac{1}{2}}\frac{(l-k)(2l-k+1)}{(k-1)!(2l-k)! }p_{k}p_{2l-k}\\
&=\frac{1}{{z}}\sum_{k'=0}^{2l-1}(-1)^{k'+1+l-\frac{1}{2}}\frac{(l-k'-1)(k'+1)(2l-k')}{(k')!(2l-k')! }p_{k'}p_{2l-k'}\\
&\hskip 10pt +\frac{1}{{z}}\sum_{k=0}^{2l}(-1)^{k+l-\frac{1}{2}}\frac{(l-k)(2l-k+1)k}{k!(2l-k)!
}p_{k}p_{2l-k}\\
&=\frac{1}{{z}}\sum_{k=0}^{2l}(-1)^{k+l+\frac{1}{2}}\frac{(k+1)(l-k-1)(2l-k)+(l-k)(-2l+k-1)k}{k!(2l-k)!}p_kp_{2l-k}\\
&= \frac{1}{{z}}\sum_{k=0}^{l-\frac{1}{2}}(-1)^{k+l+\frac{1}{2}}\frac{3\,{k}^{2}+2\,{l}^{2}-6\,kl-2\,l+k}{k!(2l-k)!}p_kp_{2l-k}
\endaligned$$
$$\aligned{ }
&\hskip 10pt +\frac{1}{{z}}\sum_{k=l+\frac{1}{2}}^{2l}(-1)^{k+l+\frac{1}{2}}\frac{3\,{k}^{2}+2\,{l}^{2}-6\,kl-2\,l+k}{k!(2l-k)!}p_kp_{2l-k}\\
&=\frac{1}{{z}}\sum_{k=0}^{l-\frac{1}{2}}(-1)^{k+l+\frac{1}{2}}\frac{3\,{k}^{2}+2\,{l}^{2}-6\,kl-2\,l+k}{k!(2l-k)!}p_{2l-k}p_k\\
&\hskip 10pt +\frac{1}{{z}}\sum_{k=0}^{l-\frac{1}{2}}(-1)^{k+l+\frac{1}{2}}\frac{3\,{k}^{2}+2\,{l}^{2}-6\,kl-2\,l+k}{k!(2l-k)!}
(-1)^{l+k+\frac{1}{2}}k!(2l-k)!{z}\\
&\hskip 10pt +\frac{1}{{z}}
\sum_{k=l+\frac{1}{2}}^{2l}(-1)^{k+l+\frac{1}{2}}\frac{3\,{k}^{2}+2\,{l}^{2}-6\,kl-2\,l+k}{k!(2l-k)!}p_kp_{2l-k}\\
&=\frac{1}{{z}}\sum_{k=0}^{l-\frac{1}{2}}(-1)^{k+l+\frac{1}{2}}\frac{3\,{k}^{2}+2\,{l}^{2}-6\,kl-2\,l+k
}{k!(2l-k)!}p_{2l-k}p_k\\
&\hskip 10pt +\sum_{k=0}^{l-\frac{1}{2}}(3\,{k}^{2}+2\,{l}^{2}-6\,kl-2\,l+k)\\
&\hskip 10pt +\frac{1}{{z}}\sum_{k=l+\frac{1}{2}}^{2l}(-1)^{k+l+\frac{1}{2}}
\frac{3\,{k}^{2}+2\,{l}^{2}-6\,kl-2\,l+k}{k!(2l-k)!}p_kp_{2l-k}\\
&=\frac{1}{{z}}\sum_{k=0}^{l-\frac{1}{2}}(-1)^{k+l+\frac{1}{2}}
\frac{3\,{k}^{2}+2\,{l}^{2}-6\,kl-2\,l+k}{k!(2l-k)!}p_{2l-k}p_k-\frac{(l+\frac{1}{2})^2}{2}\\
&\hskip 10pt +\frac{1}{{z}}\sum_{k=0}^{l-\frac{1}{2}}(-1)^{2l-k+l
+\frac{1}{2}}\frac{-6\,kl+3\,{k}^{2}-k+2\,{l}^{2}}{k!(2l-k)!}p_{2l-k}p_{k}\\
&=\frac{1}{{z}}\sum_{k=0}^{l-\frac{1}{2}}(-1)^{k+l+\frac{1}{2}}
\frac{2(k-l)}{k!(2l-k)!}p_{2l-k}p_k-\frac{(l+\frac{1}{2})^2}{2}\\
&=
\frac{1}{{z}}\sum_{k=0}^{l-\frac{1}{2}}(-1)^{k+l+\frac{1}{2}}\frac{2(k-l)}{k!(2l-k)!}p_{2l-k}p_k-\frac{(l+\frac{1}{2})^2}{2}=H.
\endaligned$$
\nopagebreak
\end{proof}

Finally, the relations $[H, E]=2E$ and $[H, F]=-2F$ follow easily from Lemma~\ref{claim1}. 
Theorem \ref{thm-1} follows.

\subsection{Proof of  Theorem~\ref{thm-2}}\label{s1.2}

Theorem~\ref{thm-2} can be proved by
a direct computation. We give here a more conceptual argument.

Let $w$ be a highest weight
vector of $M_{\tg}(\dot{z},\dot{h})$ and assume $\dot{z}\ne 0$. Then
$U(\tH) w=M_{\tH}(\dot{z})$, which is a simple $\tH$-module. From
Theorem \ref{thm-1},  we have the simple highest weight $\tg$ module
$(U(\tH)w)^{\tg}$ with $h\cdot w=-\frac{(l+\frac{1}{2})^2}{2}w$ and
$e\cdot w=0$, where we use $\cdot$ to denote the new action. Hence
we have the decomposition $U(\tH) w\cong (U(\H)w)^{\tg} \otimes (\C u)$ of 
$\C h+\tH+\C e$-modules, 
where $\C u$ is regarded as a  $\C h+\C e+\tH$-module via $h
u=(\dot{h}+\frac{(l+\frac{1}{2})^2}{2}) u$ and $(\C e+\tH)u=0$. Using
\cite[Lemma~8]{LZ1}, we have
$$\aligned M_{\tg}(\dot{z},\dot{h})&\cong \Ind_{\C h+\C
e+\tH}^{\tg}\Ind_{\tg_++\tg_0}^{\C h+\C e+\tH} \C w\cong \Ind_{\C
h+\C e+\tH}^{\tg} (U(\tH) w)\\ &\cong  \Ind_{\C h+\C e+\tH}^{\tg}
((M_{\tH}(\dot{z}))^{\tg} \otimes \C u)\cong
(M_{\tH}(\dot{z}))^{\tg}\otimes \Ind_{\C h+\C e+\tH}^{\tg} \C u\\
&\cong M_{\tH}(\dot{z})^{\tg}\otimes
M_{\mathfrak{sl}_2}(\dot{h}+\text{\tiny$\frac{1}{2}$}(l+\text{\tiny$\frac{1}{2}$})^2)^{\tg}.\endaligned$$
The second part of Theorem~\ref{thm-2} follows from this and
\cite[Theorem~7]{LZ1}. 

\subsection{Proof of  Theorem  \ref{thm-3}}\label{s1.3}

Claim~\eqref{thm-3.1} follows from \cite[Theorem~7]{LZ1}.
To prove claim~\eqref{thm-3.2}, let $\tH=\tH^{(l)}$. We have an isomorphism  
$M\cong M^{\tg}\otimes \C$ of $\tH$-modules, where $\C$
 is the trivial $\tH$-module. Applying \cite[Lemma~8]{LZ1}, we get
$$\Ind_{\tH}^{\tg} M\cong \Ind_{\tH}^{\tg} (M^{\tg}\otimes \C)\cong M^{\tg}\otimes \Ind_{\tH}^{\tg}
\C\cong  M^{\tg}\otimes U(\sl_2)^{\tg}.$$

Now $V$ is isomorphic to a simple quotient of  $M^{\tg}\otimes
U(\sl_2)^{\tg}$, which, by \cite[Theorem~7]{LZ1}, has to be of
the form $M^{\tg}\otimes N^{\tg}$ for some simple $\sl_2$-module
$N$. 

\subsection{Examples}\label{s1.4}

Here we give three concrete examples of simple $\tg^{(l)}$-mo\-du\-les with different properties.

\begin{example}
{\rm 
Let $\dot{z}\in \C^*$, $l\in N-\frac{1}{2}$, $\tg=\tg^{(l)}$ and
$F=\C[x_1, x_3,\ldots, x_{2l}]$ be the usual Fock space . Then
we have the classical oscillator representation of $\tg=\tg^{(l)}$ on
$F$ with the action given as follows (here $\rightarrow$ means ``acts as''): 
$$ p_k\rightarrow x_{2(k-l)}, \quad k=l+\frac{1}{2},\ldots, 2l,$$
$$ p_k \rightarrow (-1)^{k+l+\frac{1}{2}}\dot{z}k!(2l-k)!\frac{\partial}{\partial x_{2(l-k)}}, \quad k=0,1,\ldots,l-\frac{1}{2},$$
$$e\rightarrow -\frac{\dot{z}}{2}\left((l+\frac{1}{2})!\frac{\partial}{\partial x_1}\right)^2+
\sum_{k=1}^{l-\frac{1}{2}}(2l-k+1)x_{2(l-k)}\frac{\partial}{\partial
x_{2(l-k+1)}},$$
$$  f\rightarrow \frac{1}{2\dot{z}}{\left(\frac{x_1}{(l-\frac{1}{2})!}\right)}^2+\sum_{k=0}^{l-\frac{1}{2}} kx_{2(l-k+1)}
\frac{\partial}{\partial x_{2(l-k)}},$$
$$ h\rightarrow -d-\frac{(l+\frac{1}{2})^2}{2},\quad z\rightarrow\dot{z};$$ 
where $d=\sum_{k=0}^{l-\frac{1}{2}}2(l-k)x_{2(l-k)}\frac{\partial}{\partial x_{2(l-k)}}$ is the degree derivation.
It is easy to check that $F\cong
\bar{M}_{\tg}(\dot{z},-\frac{(l+\frac{1}{2})^2}{2})$.
}
\end{example}

Now we construct simple $\tg$-module from simple Whittaker modules over $\tH$ (in the sense of \cite{BM,C}).

\begin{example} 
{\rm
Let $\dot{z}\in \C^*$, $l\in N-\frac{1}{2}$, $\tg=\tg^{(l)}$ and $F=\C[x_1, x_3,\ldots, x_{2l}]$. Then for any $\mu=(\mu_0,\mu_1,\ldots,\mu_{l-\frac{1}{2}})\in \C^{l+\frac{1}{2}}$ we have the oscillator representation of $\tg=\tg^{(l)}$ on $F$ with the action given by $z\rightarrow\dot{z}$,
$$ p_k\rightarrow x_{2(k-l)}, \quad k=l+\frac{1}{2},\ldots, 2l,$$
$$ p_k \rightarrow (-1)^{k+l+\frac{1}{2}}\dot{z}k!(2l-k)!\left(\frac{\partial}{\partial x_{2(l-k)}}+\mu_k\right), \quad k=0,1,\ldots,l-\frac{1}{2},$$
$$e\rightarrow -\frac{\dot{z}}{2}\left((l+\frac{1}{2})!(\frac{\partial}{\partial x_1}+\mu_{l-\frac{1}{2}})\right)^2+\sum_{k=1}^{l-\frac{1}{2}}(2l-k+1)x_{2(l-k)}\left(\frac{\partial}{\partial x_{2(l-k+1)}}+\mu_{k-1}\right),$$
$$  f\rightarrow \frac{1}{2\dot{z}}{\left(\frac{x_1}{(l-\frac{1}{2})!}\right)}^2+\sum_{k=0}^{l-\frac{1}{2}} kx_{2(l-k+1)}\left(\frac{\partial}{\partial x_{2(l-k)}}+\mu_k\right),$$
$$ h\rightarrow -d-\left(\sum_{k=0}^{l-\frac{1}{2}}2(l-k)x_{2(l-k)}\mu_k\right)-\frac{(l+\frac{1}{2})^2}{2},$$ where $d$ is the degree derivation from the previous example.
This simple $\tg$-module on $F$ is isomorphic to the module
$(\Ind_{\tH^++\C z}^{\tH}\C w)^{\tg}$ where $zw=\dot zw$ and
$p_iw=(-1)^{i+l+1/2}i!(2l-i)!\mu_iw$ for $i=0, 1, ..., l-1/2$.
}
\end{example}

The next example is constructed using simple weight $\tH$-modules with
infinite dimensional weight spaces.

\begin{example} 
{\rm
Let $\dot{z}\in \C^*$, $l\in N-\frac{1}{2}$,  $\tg=\tg^{(l)}$ and $\hat{F}=\C[x_1^{\pm 1}, x_3^{\pm 1},\ldots, x_{2l}^{\pm 1}]$. Then for any $\mu=(\mu_0,\mu_1,\ldots,\mu_{l-\frac{1}{2}})\in \C^{l+\frac{1}{2}}$, we have the oscillator representation of $\tg=\tg^{(l)}$ on $\hat{F}$ with the action given by $z\rightarrow\dot{z}$,
$$ p_k\rightarrow x_{2(k-l)}, \quad k=l+\frac{1}{2},\ldots, 2l,$$
$$ p_k \rightarrow (-1)^{k+l+\frac{1}{2}}\dot{z}k!(2l-k)!\left(\frac{\partial}{\partial x_{2(l-k)}}+\mu_kx_{2(l-k)}^{-1}\right),
\quad k=0,1,\ldots,l-\frac{1}{2},$$
$$e\rightarrow -\frac{\dot{z}}{2}\left((l+\frac{1}{2})!(\frac{\partial}{\partial x_1}+\mu_{l-\frac{1}{2}}x_1^{-1})\right)^2+\sum_{k=1}^{l-\frac{1}{2}}(2l-k+1)x_{2(l-k)}\left(\frac{\partial}{\partial x_{2(l-k+1)}}+\mu_{k-1}x_{2(l-k+1)}^{-1}\right),$$
$$  f\rightarrow \frac{1}{2\dot{z}}{\left(\frac{x_1}{(l-\frac{1}{2})!}\right)}^2+\sum_{k=0}^{l-\frac{1}{2}} kx_{2(l-k+1)}\left(\frac{\partial}{\partial x_{2(l-k)}}+\mu_kx_{2(l-k)}^{-1}\right),$$
$$ h\rightarrow -d-\left(\sum_{k=0}^{l-\frac{1}{2}}2(l-k)\mu_k\right)-\frac{(l+\frac{1}{2})^2}{2},$$ where $d$ is the degree derivation from the previous examples.
It is clear that $\hat{F}$ is a weight $\tg$-module and all nonzero weight spaces of $\hat{F}$ 
are infinite dimensional for $l>\frac{1}{2}$.  It is easy to check that $\hat{F}$ is simple 
if and only if $\mu_i\notin \Z$ for all $i=0,1,\ldots, l-\frac{1}{2}$.
}
\end{example}

Similarly to the above examples,  using Theorem \ref{thm-3} one constructs a lot of simple
$\tg^{(l)}$-modules from   simple $\sl_2$-modules and simple
$\tH^{(l)}$-modules with nonzero central charge. All simple modules over $\sl_2$ and  $\tH^{(1/2)}$ are
described in \cite{B}. The problem to classify all simple modules  over $\tH^{(l)}$ 
with $l\in \frac{1}{2}+\N$ is still open.

\section{Simple weight modules with a finite dimensional nonzero weight space}\label{s2}

In this section, we will classify all simple $\tg^{(l)}$-modules and all simple $\g^{(l)}$-modules
which are weight modules and have a finite dimensional nonzero weight space. 

\subsection{Proof of Theorem~\ref{highestN}}\label{s2.1}

Claim \eqref{highestN.1} is straightforward. To prove claim \eqref{highestN.2} assume that $\dot{p}_{l}\ne 0$.
It is easy to check that the $\mathfrak{n}$-module $F_1$ defined by \eqref{eqaa1} and \eqref{eqaa2} is a
simple  highest weight $\mathfrak{n}$-module. Now claim \eqref{highestN.2} would follow if we could prove that
$\Ind_{\mathfrak{n}}^\g F_1$ is a simple  $\g$-module.

Assume that $\Ind_{\mathfrak{n}}^\g F_1$ is not simple and therefore there exists some  nonzero
$v=\sum_{i=0}^k f^k\otimes v_i$, where $v_i\in F_1$ with $U(\g) v\ne \Ind_{\mathfrak{n}}^\g F_1$. 
We have $k>0$ and we may assume that $k$ is minimal. Recall that $p_i F_1=0$ for all $i=0,\ldots, l-1$. It
is now easy to see that $0\ne p_{l-1} v=\sum_{i=0}^k ([p_{l-1}, f^k])\otimes
v_i$ has degree $k-1$ with respect to $f$, a contradiction.

\subsection{Preliminary results on weight modules}\label{s2.2}

In this subsection we extend the techniques and methods from \cite{D} to all conformal Galilei algebras.

\begin{lemma}\label{local-nilpotency} 
Let $l\in \frac{1}{2}\N$ and $s\in \{e,f,p_k\,\,|\,\,k=0,\ldots,2l\}\subset \g=\g^{(l)}$. 
\begin{enumerate}[$($i$)$]
\item\label{local-nilpotency.1} The action of $\ad s$ on $U(\g)$ is locally nilpotent.
\item\label{local-nilpotency.2} If $V$ is a $\g$ module, then the set
$\{v\in V\,\,|\,\,s^n v=0 \,\,\rm{for\,\,some\,\,}n\in \N\}$ is a submodule of $V$.
\item\label{local-nilpotency.3} Let $V$ be  a simple $\g$ module. Then $s$ acts locally
nilpotently on $V$ if and only if there exists some $0\ne v\in V$
such that $s v=0$.
\end{enumerate}
\end{lemma}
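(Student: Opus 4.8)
The plan is to prove the three parts of Lemma~\ref{local-nilpotency} in the order (i), (ii), (iii), since each feeds into the next.

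For part \eqref{local-nilpotency.1}, I would first observe that for each generator $s\in\{e,f,p_k\}$, the element $s$ acts nilpotently on the finite-dimensional spanning set of generators of $\g^{(l)}$: this is immediate from the bracket relations, since $\ad e$ and $\ad f$ act nilpotently on the $\sl_2$-triple and shift the index $k$ of the $p_k$ up or down (so $(\ad e)^{2l+2}p_k=0$ and likewise for $f$), while $\ad p_k$ lands in the center after one step and kills $h,e,f$ only modulo lower terms — more carefully, $(\ad p_k)^2$ annihilates every generator. Then I would invoke the standard fact that if $\ad s$ is nilpotent on a generating set of a Lie algebra $\g$, then $\ad s$ is locally nilpotent on $U(\g)$: the Leibniz rule gives $(\ad s)^N(x_1\cdots x_m)=\sum\binom{N}{i_1,\ldots,i_m}((\ad s)^{i_1}x_1)\cdots((\ad s)^{i_m}x_m)$, so choosing $N$ large enough relative to $m$ and the nilpotency degrees forces every term to vanish.

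For part \eqref{local-nilpotency.2}, let $W:=\{v\in V\mid s^nv=0\text{ for some }n\}$. This is visibly a subspace (take the max of the two exponents for a sum). To see it is a submodule, take $v\in W$ with $s^nv=0$ and $x\in U(\g)$; by \eqref{local-nilpotency.1} there is $m$ with $(\ad s)^mx=0$, and then $s^{n+m}(xv)=\sum_{i}\binom{n+m}{i}((\ad s)^ix)(s^{n+m-i}v)$, where each summand vanishes because either $i\geq m$ (killing the first factor) or $n+m-i\geq n$ (killing the second). Hence $xv\in W$.

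For part \eqref{local-nilpotency.3}, one direction is trivial: if $s$ acts locally nilpotently then any $v\neq 0$ has $s^nv=0$ for some minimal $n\geq 1$, and $s^{n-1}v$ is a nonzero vector killed by $s$. Conversely, if some nonzero $v$ satisfies $sv=0$, then $v$ lies in the submodule $W$ of \eqref{local-nilpotency.2}, so $W\neq 0$; since $V$ is simple, $W=V$, i.e. $s$ acts locally nilpotently on all of $V$. I do not expect any serious obstacle here; the only mildly delicate point is the bookkeeping in step \eqref{local-nilpotency.1} showing $\ad p_k$ is nilpotent (degree $2$) on generators, which one just reads off the relations $[p_k,p_{k'}]\in\C z$ and $[z,\g]=0$, together with $[e,p_k]$, $[f,p_k]$, $[h,p_k]$ all lying in $\H+\C z$, on which $\ad p_k$ then acts as zero. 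The same argument works verbatim for $\g^{(l)}$ with $l\in\frac12\N$, as stated.
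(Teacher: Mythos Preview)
Your argument is correct and is precisely the standard one: nilpotency of $\ad s$ on a generating set plus the Leibniz (multinomial) formula gives local nilpotency on $U(\g)$; the identity $s^{N}(xv)=\sum_{i}\binom{N}{i}((\ad s)^{i}x)(s^{N-i}v)$ then yields \eqref{local-nilpotency.2}; and \eqref{local-nilpotency.3} follows by simplicity. The paper does not spell any of this out and simply points to \cite[Lemma~3]{D}, whose proof is exactly the argument you wrote, so your approach coincides with the intended one.

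One cosmetic remark: since the lemma is stated for $\g=\g^{(l)}$ (centerless), you have $[p_k,p_{k'}]=0$ outright and need not mention $z$; your phrasing ``$[p_k,p_{k'}]\in\C z$ and $[z,\g]=0$'' is harmless but better suits $\tg^{(l)}$. Either way $(\ad p_k)^2$ kills $e,f,h$ and $(\ad p_k)$ kills each $p_j$, so the nilpotency degree is at most $2$ on generators, as you say.
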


\begin{proof} Follows mutatis mutandis the proof of \cite[Lemma~3]{D}. \end{proof}

\begin{lemma}\label{lemma-z=0} 
Let $l\in \frac{1}{2} \N$, and $V$ be a simple weight $\g^{(l)}$-module. Suppose that 
$\supp(V)\subseteq \lambda+\Z$ for some $\lambda\in\C$, that $\dim V_{\lambda}<\infty$ and that $\H  V\ne 0$.
\begin{enumerate}[$($i$)$]
\item\label{lemma-z=0.1} If $V$ is not a highest (resp. lowest) weight module,
then $e$ (resp. $f$) acts injectively on $V$.
\item\label{lemma-z=0.2} 
If $V$ is neither a highest nor a lowest weight module,  then both
$e$ and $f$ act bijectively on $V$.  Consequently,  for any $\mu\in \supp(V)$,   $\dim
V_{\mu+2i}=\dim V_{\mu}$ for all $i\in \Z$. If   $l\in \N-\frac{1}{2}$, then $\dim V_{\mu+i}=\dim
V_{\mu}$ for all $i\in \Z$. 
\end{enumerate}
\end{lemma}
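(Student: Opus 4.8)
The plan is to mimic the strategy of \cite[Lemma~4]{D}, using the $\sl_2$-theory together with the local nilpotency statements of Lemma~\ref{local-nilpotency}. For part~\eqref{lemma-z=0.1}, suppose $V$ is not a highest weight module but $e$ does not act injectively. Then $\ker e\ne 0$, and since $V$ is a weight module, $\ker e$ is spanned by weight vectors; pick a nonzero $v\in V_\mu$ with $ev=0$. By Lemma~\ref{local-nilpotency}\eqref{local-nilpotency.3}, $e$ then acts locally nilpotently on all of $V$. The point is now to propagate this: I would show that the subspace $W:=\{w\in V\mid e^nw=0\text{ for some }n\}$, which is a submodule by Lemma~\ref{local-nilpotency}\eqref{local-nilpotency.2} and hence equals $V$ by simplicity, forces $V$ to have a highest weight vector. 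Concretely, choose $v\in V_\mu\cap\ker e$ with $\mu$ of maximal possible real part of $\supp(V)$ among such vectors — this is where finite-dimensionality of $V_\lambda$ (and the fact that all weights lie in a single coset $\lambda+\Z$) is used to guarantee the relevant sets are finite/bounded and a maximal element exists. Using the commutation relations $[e,p_k]=kp_{k-1}$ and $[h,e]=2e$, one checks that $U(\H)v$ lies in $\ker e$ and is finite-dimensional (because $\H$ is nilpotent, in fact $p_k$ lowers a natural filtration), so one can further choose inside $U(\H)v$ a vector killed by all $p_k$ as well; that vector is a highest weight vector, contradiction. The case of $f$ and lowest weight modules is symmetric.

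For part~\eqref{lemma-z=0.2}: if $V$ is neither highest nor lowest weight, part~\eqref{lemma-z=0.1} gives that $e$ and $f$ both act injectively. To get surjectivity I would use the $\sl_2=\span\{e,f,h\}$-module structure. On each weight space the operator $fe$ (or the Casimir) acts, and injectivity of $e$ and $f$ rules out the existence of any finite-dimensional $\sl_2$-subquotient with a singular vector; more directly, consider $eV\cap V_{\mu+2}$: if this were a proper subspace of $V_{\mu+2}$ for some $\mu$, one produces, using that $V_{\mu+2}$ is finite-dimensional and $f$ injective, a vector in $\ker e$ after applying a suitable polynomial in $fe$, contradicting injectivity of $e$ — alternatively, $eV$ is a nonzero submodule hence all of $V$, and combined with injectivity this gives bijectivity of $e$; symmetrically for $f$. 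Bijectivity of $e\colon V_\mu\to V_{\mu+2}$ immediately yields $\dim V_{\mu+2i}=\dim V_\mu$ for all $i\in\Z$. Finally, when $l\in\N-\frac12$, the weights of $\tg^{(l)}$ under $\ad h$ include the odd integers $2(l-k)$ coming from the $p_k$ (since $2l$ is odd, $2(l-k)$ runs over odd values), so some $p_k$ shifts weight by an odd integer; one shows this $p_k$ also acts bijectively between the relevant weight spaces (its kernel and cokernel are submodules, using Lemma~\ref{local-nilpotency}, and $\H V\ne 0$ rules out the kernel being everything), giving $\dim V_{\mu+i}=\dim V_\mu$ for \emph{all} $i\in\Z$.

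The main obstacle I expect is the argument that local nilpotency of $e$ (from a single vector in its kernel) actually forces a genuine highest weight vector, i.e. a vector killed simultaneously by $e$ and all of $\H$: one must carefully combine the maximality-of-weight choice with the action of the abelian (but noncentral, since $\tg^{(l)}$ has $[p_k,p_{k'}]=\delta_{k+k',2l}(\dots)z$) subalgebra $\H$, controlling the finite-dimensional module $U(\H)v$ and locating a common eigenvector/annihilated vector inside it. The centrally extended case $\tg^{(l)}$ needs the extra hypothesis $\H V\ne 0$ precisely to exclude the degenerate situation where everything in $\H$ acts by zero (which would put us back in the $\sl_2$-module case $N^{\tg}$), and to ensure the $p_k$-kernels appearing above are proper.
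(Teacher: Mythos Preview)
Your plan for part~\eqref{lemma-z=0.1} contains two genuine gaps. First, the claim that $U(\H)v\subset\ker e$ is false: from $[e,p_k]=kp_{k-1}$ you get $e(p_kv)=kp_{k-1}v$, which need not vanish. Second, there is no reason for $U(\H)v$ to be finite-dimensional: $\H$ is abelian (you are in $\g^{(l)}$, where $z=0$, so your parenthetical about noncentrality is misplaced), and an abelian algebra acting on a vector can easily generate an infinite-dimensional cyclic module. Relatedly, your ``maximal real part'' choice is impossible: once $e$ is locally nilpotent on the simple non-highest-weight module $V$, the support is unbounded above and $\ker e$ meets arbitrarily high weight spaces (apply a high power of $e$ to any vector of large weight).

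The paper's argument supplies exactly the missing idea. It does \emph{not} try to find a vector killed by all of $\H$ directly. Instead it splits into two cases. If every $p_i$ with $0\le i\le\lfloor l-\tfrac12\rfloor$ has nonzero kernel, then by Lemma~\ref{local-nilpotency} each such $p_i$ (and $e$) acts locally nilpotently; since $\span\{e,p_0,\dots,p_{\lfloor l-1/2\rfloor}\}$ is a nilpotent Lie algebra, a standard argument produces a common null weight vector, i.e.\ a highest weight vector, contradiction. Otherwise there is a minimal $k$ with $p_k$ injective; one then finds (by the nilpotent-algebra argument applied to $\span\{e,p_0,\dots,p_{k-1}\}$) a weight vector $w$ with $ew=p_0w=\cdots=p_{k-1}w=0$, and observes that each $p_k^jw$ is a nonzero $\sl_2$-singular vector (nonzero by injectivity of $p_k$; singular because $e(p_k^jw)=jp_{k-1}p_k^{j-1}w+\cdots=0$). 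These have weights tending to $+\infty$, and their $\sl_2$-descendants force $\dim V_\lambda=\infty$, contradiction. This dichotomy---some $p_k$ injective versus all locally nilpotent---is the key mechanism your outline lacks.

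For part~\eqref{lemma-z=0.2}, two of your proposed shortcuts are also incorrect: $eV$ is not a submodule (compute $f(ev)=efv-hv$), and $\ker p_k$ is not a submodule (Lemma~\ref{local-nilpotency} gives that the locally nilpotent set is a submodule, not the kernel). The paper's route is simpler: injectivity of both $e$ and $f$ together with $\dim V_\lambda<\infty$ forces $\dim V_{\lambda+2i}=\dim V_\lambda$ for all $i$ directly, and hence bijectivity. For the odd shifts when $l\in\N-\tfrac12$, if $\dim V_\lambda\ne\dim V_{\lambda+1}$ then every $p_i$ (which shifts weight by an odd integer) must have nonzero kernel, and one reruns the nilpotent-algebra argument to produce $w$ with $\H w=0$, contradicting $\H V\ne0$.
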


\begin{proof} 
We start with claim \eqref{lemma-z=0.1}. We prove the claim for $e$ (and then the claim for $f$ follows
by symmetry). Assume that $V$ is not a highest weight module and that the kernel of $e$ on $V$ is nonzero.
Consider first the situation when the kernel of each $p_i$, $i=0,1,\ldots,\lfloor l-1/2\rfloor$,
on $V$ is nonzero. Using Lemma \ref{local-nilpotency} and nilpotency of the Lie algebra 
$\span\{e,p_i\,|\,i=0,\ldots,\lfloor l-1/2\rfloor\}$, we may find a common eigenvector for 
all these elements. This implies that $V$ is a highest weight module, a contradiction. 
Therefore such situation is not possible. Hence there is a minimal $k\in \{0,1,\ldots,\lfloor l-1/2\rfloor\}$
such that the element $p_k$ acts injectively on $V$. Again, using Lemma \ref{local-nilpotency} and nilpotency of the Lie algebra $\span\{e, p_0,\ldots, p_{k-1}\}$, we may find a weight vector $0\ne w\in V_{\lambda+k_0}$ with $e w=p_i w=0$ for all $i=0,1,\ldots, k-1$. Now it is straightforward to verify that for each $j\in \N$ the vector
$p_k^j w$ is a nonzero highest weight vector for the $\sl_2$-subalgebra. This implies that all weight spaces 
of $V$ are infinite dimensional, a contradiction. Claim \eqref{lemma-z=0.1} follows.

Now we prove claim \eqref{lemma-z=0.2}. From claim \eqref{lemma-z=0.1}, both $e$ and $f$ act injectively on $V$. 
This implies that $\dim V_{\lambda+i}=\dim V_{\lambda+j}$ for all $i-j\in 2\Z$, in particular, 
both $e$ and $f$ act bijectively on $V$.
If $l\in \N-\frac{1}{2}$ and $\dim V_{\lambda}\ne \dim
V_{\lambda+1}$, then  for all $i=0,1,\ldots,2l$ the kernel of $p_i$ is nonzero.
Similarly to the above it follows that  $\H w=0$ for some
$w\in V$, a contradiction. Hence $\dim V_{\lambda+i}=\dim
V_{\lambda+j}<\infty$ for all $i,j\in \Z$. This completes the proof.
\end{proof}

For $l\in \frac{1}{2}\N$ and $\g=\g^{(l)}$ we denote by $U(\g)^{(f)}$ the localization of $U(\g)$ 
with respect to the multiplicative set $\{f^i\vert i\in\Z_+\}$.

\begin{lemma}\label{auto} 
Let $l\in \frac{1}{2}\N$ and $\g=\g^{(l)}$. For any $x\in \C$  there is a unique 
automorphism $\theta_x$ of $U(\g)^{(f)}$ such that
\begin{displaymath}
\theta_x(f)=f,\quad \theta_x(h)=h-2x ,\quad\theta_x(e)=e+x(h-1-x)f^{-1}
\end{displaymath}
and
\begin{equation}
\label{def-theta2}
\theta_x(p_{j})=\sum_{k=0}^{2l-j}(-1)^k \binom 
xk\frac{(2l-j)!}{(2l-j-k)!}f^{-k}p_{j+k},\quad  j=0,1,\ldots,2l.
\end{equation}
\end{lemma}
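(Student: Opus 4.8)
The plan is to verify directly that the prescribed formulas define an endomorphism of the localized enveloping algebra $U(\g)^{(f)}$, and then to produce an explicit two-sided inverse, which forces $\theta_x$ to be an automorphism; uniqueness is automatic because the elements $e,f,h,p_0,\ldots,p_{2l}$ generate $U(\g)$ and hence $f$ inverted generates $U(\g)^{(f)}$. First I would record the universal property of the localization: a $\C$-algebra homomorphism $U(\g)^{(f)}\to A$ is the same as a homomorphism $\phi\colon U(\g)\to A$ with $\phi(f)$ invertible; so it suffices to check that the assignment on $e,f,h,p_j$ respects all the defining brackets of $\g^{(l)}$ and that the image of $f$, namely $f$ itself, is a unit in $U(\g)^{(f)}$ (which it is by construction). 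Note that $\theta_x(f)=f$ commutes with $f^{-1}$, so every formula makes sense in $U(\g)^{(f)}$, and since $[h,f]=-2f$ one has $f^{-k}h = (h+2k)f^{-k}$, a relation I would use constantly to move powers of $f^{-1}$ past everything.

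The bracket checks split into the $\sl_2$-part and the $\H$-part plus the mixed relations. For the $\sl_2$-part, $[\theta_x(h),\theta_x(f)]=-2\theta_x(f)$ is immediate; $[\theta_x(h),\theta_x(e)]=2\theta_x(e)$ follows from $[h,e]=2e$ together with $[h,(h-1-x)f^{-1}]=2(h-1-x)f^{-1}$; and $[\theta_x(e),\theta_x(f)]=\theta_x(h)$ is the one genuine $\sl_2$-computation, using $[e,f]=h$, $[f^{-1},f]=0$, and $[h f^{-1},f]=[h,f]f^{-1}\cdot(\text{corrections})$ — in fact the cleanest way is to recall that $e\mapsto e+x(h-1-x)f^{-1}$, $h\mapsto h-2x$, $f\mapsto f$ is exactly the standard ``shift'' automorphism of $U(\sl_2)^{(f)}$ by the central-ish character, so one may quote or re-derive it in the $\sl_2$ subalgebra and observe it extends. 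For the Heisenberg relations $[\theta_x(p_j),\theta_x(p_{j'})]=\delta_{j+j',2l}(-1)^{j+l+1/2}j!(2l-j)!\,z$ (and $=0$ in the centerless case, or with the appropriate $p_l$ in the $l\in\N$ case — here the statement is uniform over $l\in\frac12\N$, so one must treat $z=0$ simultaneously), I would substitute the binomial expansion \eqref{def-theta2}, commute all $f^{-k}$ to one side, and collect; the sum that appears is a Vandermonde-type convolution of binomial coefficients $\sum_k(-1)^k\binom xk\binom{x}{m-k}$, which telescopes to $\delta_{m,0}$ up to the factorials, giving exactly the required structure constant.

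The remaining, and most laborious, relations are the mixed ones: $[\theta_x(h),\theta_x(p_j)] = 2(l-j)\theta_x(p_j)$, $[\theta_x(e),\theta_x(p_j)] = j\,\theta_x(p_{j-1})$, and $[\theta_x(f),\theta_x(p_j)] = (2l-j)\theta_x(p_{j+1})$. The $h$-relation is a weight bookkeeping check: each term $f^{-k}p_{j+k}$ in $\theta_x(p_j)$ has $h$-weight $-2k+2(l-j-k)=2(l-j)-4k$ against the unshifted $h$, but against $\theta_x(h)=h-2x$ one must also track that, and the claim is it still comes out $2(l-j)$ times $\theta_x(p_j)$ — this works because $\theta_x(h)$ acting by bracket on a product $f^{-k}p_{j+k}$ of weight vectors for $h$ gives the $h$-weight, the $-2x$ being central for the bracket. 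The $f$-relation uses $[f,p_{j+k}]=(2l-j-k)p_{j+k+1}$ and $[f,f^{-k}]=0$, so it reduces to a shift of index in the binomial sum. The $e$-relation is the hard part: $[\theta_x(e),\theta_x(p_j)]$ produces two kinds of terms — from $[e,p_{j+k}]=(j+k)p_{j+k-1}$ and from $[x(h-1-x)f^{-1},f^{-k}p_{j+k}]$, the latter generating both a $[h,\,\cdot\,]$ contribution and an $[f^{-1},f^{-k}p_{j+k}]$ contribution that vanishes — and one must show the resulting alternating binomial sum reorganizes into $j$ times the defining sum for $\theta_x(p_{j-1})$. \textbf{I expect this $e$–$p_j$ identity to be the main obstacle}, as it is where the specific coefficient $x(h-1-x)$ in $\theta_x(e)$ and the precise normalization $(-1)^k\binom xk\frac{(2l-j)!}{(2l-j-k)!}$ in $\theta_x(p_j)$ conspire; the key algebraic input is the Pascal-type recurrence $\binom xk + \binom{x}{k-1}=\binom{x+1}{k}$ together with $\binom xk(x-k)=\binom x{k+1}(k+1)$, used to absorb the factor coming from $(h-1-x)$ after replacing $h$ by the appropriate eigenvalue on each term. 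Once all brackets are verified, $\theta_x$ is an algebra endomorphism; running the same construction with $x$ replaced by $-x$ and checking $\theta_{-x}\circ\theta_x=\mathrm{id}$ on generators (again a binomial-convolution identity, now $\sum_k(-1)^k\binom{x}{k}\binom{-x}{m-k}=\delta_{m,0}$ composed appropriately) shows $\theta_x$ is invertible, completing the proof.
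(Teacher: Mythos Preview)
Your approach is sound and, in fact, considerably more explicit than the paper's, which simply asserts that the proof ``follows mutatis mutandis'' the proof of \cite[Lemma~4.3]{Mt} (with parallel references to \cite[Proposition~8]{D} and \cite[Proposition~3.45]{M}) and gives no details. The cited technique is Mathieu's: for a nonnegative integer $n$ the map $\theta_n$ is literally conjugation by $f^{-n}$ in $U(\g)^{(f)}$ (hence manifestly an automorphism), and since both sides of each defining formula depend on $n$ polynomially through the binomial coefficients, one interpolates to arbitrary $x\in\C$. Your direct verification is a legitimate and self-contained alternative that avoids the interpolation step; what it costs is the $e$--$p_j$ computation you correctly identify as the main obstacle, which the interpolation argument sidesteps entirely.

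One slip to fix: in your $h$-relation check you write that $f^{-k}p_{j+k}$ has $h$-weight $-2k+2(l-j-k)=2(l-j)-4k$, but $[h,f]=-2f$ means $f^{-1}$ has weight $+2$, so $f^{-k}$ has weight $+2k$ and the product has weight $2k+2(l-j-k)=2(l-j)$, \emph{independent of $k$}. This is precisely what makes $[\theta_x(h),\theta_x(p_j)]=2(l-j)\theta_x(p_j)$ immediate (no cancellation is needed), and it dissolves the confusion in your paragraph about how ``it still comes out $2(l-j)$.'' Your invertibility check via $\theta_{-x}\circ\theta_x=\mathrm{id}$ is correct: on $e,f,h$ it is a one-line substitution, and on the $p_j$ it reduces to the Vandermonde identity $\sum_{k=0}^{n}\binom{x}{k}\binom{-x}{n-k}=\binom{0}{n}=\delta_{n,0}$.
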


\begin{proof}  
The proof follows mutatis mutandis the proof of \cite[Lemma~4.3]{Mt} (see also \cite[Proposition~8]{D} or 
\cite[Proposition~3.45]{M}).
\end{proof}

Let $\g=\g^{(l)}$ and $V$ be a $\g$-module on which $f$ act bijectively. Let $\sigma$ be an automorphism 
of $U(\g)^{(f)}$. Setting $g \cdot v=\sigma(g) v$  for all $g\in \g$ and $v\in V$ defines on $V$ a 
new  $\g$-module structure which we will denote by $V^{\sigma}$.

\begin{proposition}\label{thm-4-z=0} 
Let $l\in \frac{1}{2}\N$, and $V$ be a simple weight $\g^{(l)}$-module with a nonzero finite dimensional 
weight space. If $V$ is not a highest or a lowest weight module, then $\H V=0$.
\end{proposition}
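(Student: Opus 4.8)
\textbf{Proof plan for Proposition~\ref{thm-4-z=0}.}
The plan is to suppose, for contradiction, that $V$ is a simple weight $\g^{(l)}$-module with a finite dimensional nonzero weight space, which is neither a highest nor a lowest weight module, and yet $\H V\ne 0$. By Lemma~\ref{lemma-z=0}\eqref{lemma-z=0.2} the element $f$ acts bijectively on $V$, so we may form the twisted modules $V^{\theta_x}$ for the automorphisms $\theta_x$ of $U(\g)^{(f)}$ from Lemma~\ref{auto}. The first step is to understand how twisting by $\theta_x$ affects the $h$-weights: since $\theta_x(h)=h-2x$, the module $V^{\theta_x}$ has support $\supp(V)+2x$, and twisting turns the generalized $\ad h$-eigenvalue structure into a genuine weight decomposition with shifted eigenvalues. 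The key point is that the family $\{\theta_x\}$ lets us ``rotate'' weight information continuously in $x$.

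\smallskip

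The heart of the argument is to exploit the polynomial dependence on $x$ in formula~\eqref{def-theta2}: for each $j$, $\theta_x(p_j)$ is a polynomial in $x$ of degree $2l-j$ with coefficients in $U(\g)^{(f)}$, the leading coefficient (up to a nonzero scalar) being $f^{-(2l-j)}p_{2l}$. Concretely, I would pick a weight vector $0\ne v\in V_\lambda$ with $\dim V_\lambda<\infty$ and examine the vectors $\theta_x(p_j)v$. If $\H V\ne 0$ then by simplicity $\H$ acts with trivial kernel on $V$ after a suitable localization argument — more precisely, using Lemma~\ref{local-nilpotency}\eqref{local-nilpotency.3} together with the fact that $V$ is not a highest or lowest weight module, none of the $p_i$ can act locally nilpotently (an argument parallel to the proof of Lemma~\ref{lemma-z=0}\eqref{lemma-z=0.1}: a common kernel vector for $e,p_0,\dots$ or for $f,p_{2l},\dots$ would force a highest/lowest weight structure or infinite dimensional weight spaces). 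The plan is then: for generic $x$, the operator $\theta_x(p_0)$ (the top one, with the longest polynomial tail) acts injectively on the finite dimensional space $V_\lambda \to V_{\lambda+2x+2l}$-type components; feeding this back through the bijectivity of $f$ and the $\sl_2$-structure produces, for infinitely many values of a parameter, nonzero highest-weight-type vectors for the $\sl_2$-subalgebra sitting in a single weight space — exactly as in the last paragraph of the proof of Lemma~\ref{lemma-z=0}\eqref{lemma-z=0.1} — contradicting $\dim V_\lambda<\infty$.

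\smallskip

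More carefully, I would argue as follows. Since $f$ acts bijectively, $V\cong V^{(f)}$ as a $U(\g)^{(f)}$-module, so $V^{\theta_x}$ makes sense and is again simple, weight, non-highest/lowest, with the same finite dimensionality of weight spaces (weights merely shifted by $2x$). Consider $W:=\bigcap_{i=0}^{2l}\ker_V(p_i)$. If $W\ne 0$ then $\H W=0$ and, $W$ being $\sl_2$-stable-up-to-$f^{-1}$-corrections, simplicity forces $\H V=0$, done. So assume $W=0$. Now apply the twist: $\theta_x(p_{2l})=p_{2l}$ is independent of $x$, and moving down the index $j$, $\theta_x(p_j)$ picks up a tail in $f^{-1}p_{j+1},\dots,f^{-(2l-j)}p_{2l}$. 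The idea is to choose $x$ in a coset $x_0+\Z$ cleverly so that the twisted action of the lowest $\lfloor l-1/2\rfloor$ of the $p$'s acquires a common kernel vector in a fixed finite dimensional weight space, then run the Lemma~\ref{lemma-z=0}-style argument inside $V^{\theta_x}$ to manufacture infinitely many linearly independent $\sl_2$-highest-weight vectors of the same weight, contradicting finite dimensionality.

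\smallskip

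The main obstacle I anticipate is precisely this combinatorial/linear-algebra step: showing that the twist $\theta_x$ can be used to kill a controlled number of the $p_i$'s simultaneously on a finite dimensional space without killing all of them (which would contradict $W=0$) and without the support running off to the highest/lowest weight case. This requires a careful bookkeeping of which $p_i$ act locally nilpotently after twisting — essentially a rank/degree argument on the polynomial-in-$x$ matrices representing $\theta_x(p_i)$ on $V_\lambda$ — and is the place where the non-relativistic (centerless, $z=0$) structure is genuinely used. I expect the rest (bijectivity of $f$, existence and naturality of $\theta_x$, the local nilpotency machinery) to follow from the cited lemmas with only routine verification.
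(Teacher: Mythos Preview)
Your plan has a genuine gap at exactly the point you flag as the ``main obstacle'': you have no mechanism to produce, from the twist $\theta_x$, a vector on which enough of the $\theta_x(p_i)$ vanish simultaneously, and the polynomial-in-$x$ rank argument you sketch is not developed (nor is it clear how to make it work for arbitrary $l$). The paper's route avoids this difficulty entirely by using $\theta_x$ to kill $e$, not the $p_i$'s.

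Here is what you are missing. Since $\dim V_\lambda<\infty$, the operator $ef$ has an eigenvector $v_\lambda\in V_\lambda$, and then $\theta_x(e)\,fv_\lambda=(ef+x(h-1-x))v_\lambda$ is a scalar (quadratic in $x$) times $v_\lambda$; choose $x$ making it zero. In $V^{\theta_x}$ the element $e$ now kills $fv_\lambda$, so by Lemma~\ref{local-nilpotency}\eqref{local-nilpotency.2} the set $M$ of locally $e$-nilpotent vectors is a nonzero $\g$-submodule of $V^{\theta_x}$. Two ingredients you never invoke close the argument: first, Lemma~\ref{lemma-z=0}\eqref{lemma-z=0.2} gives that $V$ (hence $V^{\theta_x}$ and $M$) is \emph{uniformly bounded}, so by \cite[Lemma~3.3]{Mt} $M$ has finite length and contains a simple submodule $M'$; second, $M'$ is then a uniformly bounded simple highest weight $\g$-module, and the classification in Theorem~\ref{highestN} (for $l\in\N$) or an obvious dimension count (for $l\in\N-\tfrac12$) forces $\theta_x(\H)M'=0$. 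The triangular form of \eqref{def-theta2} (top coefficient $p_{2l}$, then $p_{2l-1}$, \dots) now gives $p_{2l-i}M'=0$ for all $i$ by induction, i.e.\ $\H M'=0$ in the original action of $V$. Since $\ker_V\H$ is a $\g$-submodule, simplicity yields $\H V=0$, the desired contradiction. Your proposal never reaches a simple highest weight subquotient and never uses either Mathieu's finite-length result or Theorem~\ref{highestN}, which is where the proof actually closes.
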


\begin{proof} 
Suppose that $V$ is neither a highest nor a lowest weight module and $\H V\ne0$. Let $\lambda\in \supp(V)$ with $0\ne\dim V_{\lambda}=n<\infty$.  Then from Lemma~\ref{lemma-z=0} we see that $e$ and $f$ acts bijectively on $V$ and $V$ is uniformly bounded. Take an eigenvector $v_{\lambda}\in V_{\lambda}$  of $ef$. Then it is easy to see that there exists some $x\in \C$ such that $\theta_x(e)fv_{\lambda}=(e+x(h-1-x)f^{-1})fv_{\lambda}=0$. Now we consider the $\g$-module $V^{\theta_x}$ which is uniformly bounded weight module. From Lemma \ref{local-nilpotency}\eqref{local-nilpotency.2}
we have that 
\begin{displaymath}
M=\{v\in V^{\theta_x}|e\,\text{  acts locally nilpotently on }\,v \} 
\end{displaymath}
is a nonzero $\g$-submodule of $V^{\theta_x}$. Moreover, being a uniformly bounded module, $M$ has 
finite length by \cite[Lemma~3.3]{Mt} and hence it has a simple submodule $M'$ (which is uniformly bounded as well). If $l\in \N$, from Proposition \ref{highestN} it follows that $\theta_x(\H) M'=0$. For $l\in \N-1/2$ the fact that 
$\theta_x(\H) M'=0$ is obvious.Using induction and \ref{def-theta2} we obtain 
$p_{2l-i}M'=0$ for $i=0,1,\ldots,2l$. Since the subspace annihilated by $\H$ is a $\g^{(l)}$-submodule of $V$, 
we have $\H V=0$, a contradiction. This proves the proposition.
\end{proof}

Next we will first establish   some properties for weight $\C h+\tH^{(l)}$-modules 
with nonzero central charge. Choose a new basis of $\C h+\tH^{(l)}$ as follows:
\begin{displaymath}
t_{2(l-k)}:=
\begin{cases}
(-1)^{k+l+\frac{1}{2}}2(l-k)\frac{p_k}{k!},& k=0,1,\ldots,l-\frac{1}{2};\\
\frac{p_k}{k!},& k=l+\frac{1}{2},\ldots,2l.
\end{cases}
\end{displaymath}
Then $\C h+\tH^{(l)}=\span\{h, t_i, z| i=\pm 1,\pm 3,\ldots, \pm
2l\}$ with
\begin{displaymath}[t_i,t_j]=i\delta_{i+j,0} z,\quad [h,t_i]=i t_i, \quad i, j=\pm 1,\ldots,\pm 2l.
\end{displaymath}
We may naturally regard $\C h+\tH^{(l)}$  as a subalgebra of $\C h+\tH^{(l+1)}$.

\begin{lemma} \label{heisenberg-1}
Let $l\in \N-\frac{1}{2}$, $V$ be a simple weight $\C h+\tH^{(l)}$-module with nonzero central charge $\dot{z}$ and $\supp(V)\subset \lambda+\Z$ with $\dim V_{\lambda}<\infty$  for some $\lambda\in \C$.
\begin{enumerate}[$($i$)$]
\item\label{heisenberg-1.1} If $l=\frac{1}{2}$, then $V$ is a highest (or a lowest) weight
module or isomorphic to  $D(a,\dot{z})$ for some $a\in\C$.
\item\label{heisenberg-1.2} If $l\in \N+\frac{1}{2}$, then $V$ is either a highest or a
lowest weight module.
\end{enumerate}
\end{lemma}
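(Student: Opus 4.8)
The strategy is to reduce everything to the structure of weight modules over a \emph{finite-dimensional Heisenberg algebra with a fixed nonzero central charge}, the latter being isomorphic (after rescaling) to the algebra $\span\{t_i,z\mid i=\pm1,\pm3,\ldots,\pm 2l\}$ with $[t_i,t_j]=i\delta_{i+j,0}z$, extended by the grading element $h$. Fix $\dot z\in\C^*$. After the change of basis given just before the statement, the operators $a_i:=t_{-i}$ for $i>0$ behave like annihilation operators and $a_i^\dagger:=t_i$ like creation operators (up to scalars), because $[t_{-i},t_i]=-i\dot z$ acts as an invertible scalar on $V$. The key combinatorial observation is: for each $i\in\{1,3,\ldots,2l\}$, either $t_i$ or $t_{-i}$ must act injectively on $V$ — if both had nonzero kernel then, by local nilpotency (Lemma~\ref{local-nilpotency}, whose proof transfers verbatim to $\C h+\tH^{(l)}$ since $\ad t_i$ is still locally nilpotent on $U(\C h+\tH^{(l)})$) and nilpotency of the abelian pair $\span\{t_i,t_{-i}\}$, one finds a common eigenvector $w$ with $t_iw=t_{-i}w=0$, forcing $[t_i,t_{-i}]w=\pm i\dot z\, w=0$, contradicting $\dot z\ne 0$.

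\textbf{Step 1: the generic case $l\in\N+\frac12$.} Here there are at least two ``frequencies'' $i$. I claim $t_{2l}$ or $t_{-2l}$ must act injectively (say $t_{-2l}$), and likewise some other $t_{\pm j}$ with $j<2l$ acts injectively; but then the relation $[t_j,t_{-2l}]=0$ (as $j\neq 2l$) together with local nilpotency of whichever of $t_{\pm j}$ is \emph{not} injective — if it is $t_j$ say, on the submodule where $t_{-2l}$ is injective — lets us build infinitely many independent vectors in a single weight space unless the module is highest or lowest weight. More precisely, I would argue as in Lemma~\ref{lemma-z=0}: pick the minimal frequency $j$ with $t_{j}$ (or $t_{-j}$) injective; on the common kernel of the smaller creation (resp. annihilation) operators, the powers $t_j^n w$ land in an arithmetic progression of weights and are linearly independent, and since there is a \emph{second} frequency they can be multiplied by powers of $t_{\pm 2l}$, producing an infinite-dimensional weight space unless the whole module collapses to a highest or lowest weight module. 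The finiteness hypothesis $\dim V_\lambda<\infty$ then forces case \eqref{heisenberg-1.2}.

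\textbf{Step 2: the rank-one case $l=\frac12$.} Now $\C h+\tH^{(1/2)}=\span\{h,t_1,t_{-1},z\}$ with $[t_1,t_{-1}]=-\dot z$. If $t_1$ or $t_{-1}$ has nonzero kernel, a common eigenvector argument as above gives a highest or lowest weight vector, so $V$ is a highest/lowest weight module. Otherwise both $t_1$ and $t_{-1}$ act injectively, hence (since $\supp V\subset\lambda+\Z$ and each weight space is finite-dimensional, and $t_1,t_{-1}$ shift weight by $\pm1$) bijectively, and all weight spaces have the same finite dimension $n$. The element $t_{-1}t_1$ is central in $U(\C h+\tH^{(1/2)})/(z-\dot z)$ modulo... actually $t_{-1}t_1$ commutes with $h$ and with neither $t_{\pm1}$, but $t_1t_{-1}-t_{-1}t_1=-\dot z$, so on the commutative subalgebra $\C[h]$ one diagonalizes; taking a joint eigenvector $v$ of $h$ one checks $t_1^{-1}t_{-1}$ acts on the weight-$\lambda$ space and, re-coordinatizing $\bigoplus_i V_{\lambda+i}$ as $\C[x,x^{-1}]\otimes V_\lambda$ via $x\mapsto$ (the operator raising weight by $1$, i.e. $t_1$ up to normalization), one reads off that $t_0=p_0$ acts as $-\dot z(a+i)$ on $x^i$ for a scalar $a$ determined by the eigenvalue of $h$ — this is exactly the module $D(a,\dot z)$, and simplicity forces $a\notin\Z$ unless $V$ were instead a highest or lowest weight module. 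This gives \eqref{heisenberg-1.1}.

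\textbf{Main obstacle.} The delicate point is Step 2's identification of the ``generic'' (neither highest nor lowest weight) simple module with $D(a,\dot z)$: one must verify that the whole module structure, not merely its support, is pinned down — i.e. that $n=\dim V_\lambda=1$ and that the scalar $a$ is the only parameter. The finiteness of $\dim V_\lambda$ is used twice (to get bijectivity of $t_{\pm1}$ and to force $n=1$ via simplicity, since $\C[x,x^{-1}]\otimes W$ with $\dim W=n>1$ would be decomposable under the commutant). In Step 1 the bookkeeping of which operators are injective on which submodule (one has to pass to the submodule of vectors killed by a power of a non-injective operator, via Lemma~\ref{local-nilpotency}\eqref{local-nilpotency.2}, and check it is all of $V$ by simplicity) is the part most prone to slips, but it is structurally the same argument as in Lemma~\ref{lemma-z=0} and \cite{D}, so I would invoke that parallel explicitly rather than repeat it.
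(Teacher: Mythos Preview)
Your Step~2 (the case $l=\tfrac12$) is essentially correct and close to the paper's argument, though the paper phrases it more cleanly: $V_\lambda$ is a simple module over $U(\tH)_0/(h-\lambda,z-\dot z)\cong\C[t_1t_{-1}]$, which is commutative, so $\dim V_\lambda=1$; then one invokes the standard fact that a simple weight module is determined by any single nonzero weight space.

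Your Step~1, however, has a genuine gap. First a minor slip: the pair $\span\{t_i,t_{-i}\}$ is not abelian (it is Heisenberg), so your ``common eigenvector via nilpotency of the pair'' does not work as stated. The conclusion that $t_i$ and $t_{-i}$ cannot both be locally nilpotent is still true, but for a different reason: if $t_iw=0$ then $t_i^nt_{-i}^nw=n!(i\dot z)^nw$, so $t_{-i}^nw\neq0$ for all $n$.

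The more serious problem is that your combinatorial argument only produces infinitely many independent vectors in one weight space when you can exhibit a \emph{locally nilpotent} operator among the $t_{\pm j}$ (so that nilpotency degree separates the vectors $t_{-i}^mt_j^nw$). You tacitly assume ``whichever of $t_{\pm j}$ is not injective'' exists, but nothing you have said rules out the case where \emph{all} $t_{\pm i}$ act bijectively. In that case your vectors $t_{\pm 2l}^m t_j^n w$ have no evident reason to be independent, and the argument collapses. (This case can in fact be dispatched directly: once $t_1$ is bijective, the weight-zero operators $s=t_1^{-3}t_3$ and $s'=t_1^3t_{-3}$ satisfy $[s,s']=3\dot z$ on each finite-dimensional weight space, and the trace of a commutator is zero --- contradiction. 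But you have not supplied any such argument.)

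The paper's proof of \eqref{heisenberg-1.2} avoids this case analysis entirely. It proceeds by induction on $l$: one first shows $V$ contains a simple submodule $M$ over the ``top'' rank-one Heisenberg $\C h+\C t_{2l}+\C t_{-2l}+\C z$, then introduces an auxiliary algebra $\mathfrak t$ with \emph{two} central elements $z_1,z_2$ (decoupling the top frequency from the rest), so that $V$ becomes a simple quotient of $M^{\mathfrak t}\otimes\Ind\C w$. By \cite[Theorem~7]{LZ1} this forces $V\cong M^{\mathfrak t}\otimes N$ with $N$ simple over $\C h+\tH^{(l-1)}$, and the inductive hypothesis together with $\dim V_\lambda<\infty$ forces both factors to be highest (or both lowest) weight. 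This tensor-decomposition approach is structurally quite different from your direct combinatorial attack.
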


\begin{proof} 
We start with claim \eqref{heisenberg-1.1}. Let $l=\frac{1}{2}$, $\tH=\tH^{(\frac{1}{2})}$ and $M$ be a simple 
weight $\C h+\tH^{(l)}$-module with central change $\dot{z}$. For $\lambda\in \supp(M)$ the space $M_{\lambda}$ 
is a simple $\C(t_1t_{-1},h,z)/(h-\lambda,z-\dot{z})\cong \C[t_1t_{-1}]$-module. As $\C[t_1t_{-1}]$ is commutative,
it follows that $\dim V_{\lambda}=1$. As usual for weight modules (see e.g. \cite{DOF}), there is exactly one 
(up to isomorphism) simple weight $\C h+\tH^{(l)}$-module $N$ such that $N_{\lambda}\cong M_{\lambda}$ 
as $\C(t_1t_{-1},h,z)$-modules (this module is, in fact, $M$). It is easy to check that in the case $\dot{z}\neq 0$ 
all simple $\C(t_1t_{-1},h,z)$-modules can be obtained  restricting simple highest weight $\C h+\tH^{(l)}$-modules,
simple lowest weight $\C h+\tH^{(l)}$-modules and  modules $D(a,\dot{z})$ to some nonzero weight spaces. Therefore, 
if $t_1$ does not act injectively on $H$,  then $H$ is a highest weight module; if $t_{-1}$ does not act injectively 
on $H$,  then $H$ is a lowest weight module; if both $t_1$ and $t_{-1}$ act injectively
and $\dot{z}\ne 0$, we have $H\cong D(a,\dot{z})$. Claim \eqref{heisenberg-1.1} follows.

Now we prove claim \eqref{heisenberg-1.2}. Assume $l\in \N+\frac{1}{2}$ and $V$ is a simple weight 
$\C h+\tH^{(l)}$-module with nonzero central charge $\dot{z}$. Further, let $\lambda\in \C$
be such that  $\supp(V)\subset \lambda+\Z$ and  $\dim V_{\lambda}<\infty$. We claim that $V$ contains a simple 
submodule over the algebra $\mathfrak{a}:=\C h+\C t_{2l}+\C t_{-2l}+\C z$.

Indeed, if $V_{\lambda}=0$, it is clear that $V$ contains a highest or lowest weight  simple 
$\C h+\C t_1+\C t_{-1}+\C z$- submodule. Therefore, without loss of generality, we may assume that
either $V_{\lambda}=0$ and $\lambda-\N\subset \supp(V)$, in which case $V$ contains
a highest weight $\mathfrak{a}$-submodule; or
$V_{\lambda}=0$ and $\lambda+\N\subset \supp(V)$, in which case $V$ contains
a lowest weight $\mathfrak{a}$-submodule.

If $V_{\lambda}\ne 0$, we either have that $U(\mathfrak{a})
V_{\lambda}$ is uniformly bounded or that it has a highest or a lowest
weight vector as a $\mathfrak{a}$-module. In both
cases, $V$ has a simple $\mathfrak{a}$-submodule.

Now we are going to prove claim \eqref{heisenberg-1.2} of the lemma by induction on $l$
(we use claim \eqref{heisenberg-1.1} as the basis of the induction).

Define the Lie algebra $\mathfrak{t}=\mathfrak{t}^{(l)}=\span\{h,t_{\pm
i},z_1,z_2|i=1,3,\ldots,2l\}$ with the following Lie bracket:
\begin{displaymath} [z_1,\mathfrak{t}]=[z_2,\mathfrak{t}]=0;\end{displaymath}
\begin{displaymath} [h,t_i]=i t_i, \quad i=\pm 1,\ldots,\pm 2l;\end{displaymath}
\begin{displaymath}[t_i,t_j]=i\delta_{i+j,0} z_1, [h,t_i]=i t_i, \quad i,j=\pm 1,\ldots,\pm 2(l-1);\end{displaymath}
\begin{displaymath}[t_{2l},t_j]=2l\delta_{j,-2l} z_2,  \quad j=\pm 1,\ldots,\pm 2l.\end{displaymath}
The algebra  $\mathfrak{t}$ has a subalgebra $\span\{h,t_{\pm
i},z_1|i=1,3,\ldots, 2(l-1)\}$ which is isomorphic to $\tH^{(l-1)}$. Moreover, we have 
$\mathfrak{t}/\C(z_1-z_2)\cong \tH$. Hence we may regard $V$ as a $\mathfrak{t}$-module
with the action $z_1=z_2=\dot{z}$. Let $\tilde{\mathfrak{t}}=\C t_{2l}+\C h+\C t_{-2l}+\C z_2$. 
Then $V$ contains a simple  $\tilde{\mathfrak{t}}$-submodule $M$. Extend $M$ to a 
$\mathfrak{t}$-module by $t_iM=z_1M=0$ for all $i=\pm 1,\ldots, \pm 2(l-1)$. Denote
the resulting module by $M^\mathfrak{t}$. Let $\C w$ be the one dimensional
module over $\tilde{\mathfrak{t}}+\C z_1$ with $\tilde{\mathfrak{t}} w=0$ and $z_1 w=\dot{z} w$. Then
$M\cong M^\mathfrak{t}\otimes \C w$ as $\tilde{\mathfrak{t}}+\C z_1$ module. Therefore $V=U(\mathfrak{t})
M=U(\H) M$ is a quotient of 
$$\Ind_{\tilde{\mathfrak{t}}+\C z_1}^\mathfrak{t} M\cong \Ind_{\tilde{\mathfrak{t}}+\C
z_1}^\mathfrak{t} (M^\mathfrak{t}\otimes \C w) \cong M^\mathfrak{t}\otimes \Ind_{\tilde{\mathfrak{t}}+\C z_1}^\mathfrak{t} \C w.$$
From \cite[Theorem 7]{LZ1} it follows that any simple quotient
of $M^\mathfrak{t}\otimes \Ind_{\tilde{\mathfrak{t}}+\C z_1}^\mathfrak{t} \C w$ is of the form  $M^\mathfrak{t}\otimes
N$, where $N$ is a simple quotient module of  $\Ind_{\tilde{\mathfrak{t}}+\C z_1}^\mathfrak{t}
\C w$. From $\dim V_{\lambda}<\infty$ and the induction hypothesis,
we have either $M$ and $N$ are both highest weight modules or $M$ and $N$
are both lowest weight modules (note that, in particular, the condition $\dim V_{\lambda}<\infty$
excludes the case $N\cong D(a,\dot{z})$). Then $V$ is a highest or a lowest weight
module, which completes the proof.
\end{proof}

\begin{lemma}\label{heisenberg-2}
Suppose that $l\in \N-\frac{1}{2}$  and $V$ is a weight module over $\C h+\tH^{(l)}$ with 
a finite dimensional nonzero weight space and $z$ acts as a nonzero scalar. 
Then $V$ has a simple $\tH^{(l)}$-submodule with all weight spaces finite dimensional. 
\end{lemma}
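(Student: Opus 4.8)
The plan is to reduce the statement to the preceding structural result, Lemma~\ref{heisenberg-1}, by producing inside $V$ a simple weight submodule over the \emph{honest} Heisenberg algebra $\tH^{(l)}$ (forgetting the $h$-action), and then checking that such a submodule automatically has finite-dimensional weight spaces. First I would observe that since $V$ is a weight $\C h+\tH^{(l)}$-module with a finite-dimensional nonzero weight space, we may pick $\lambda\in\supp(V)$ with $0\ne\dim V_\lambda<\infty$; replacing $V$ by the $\C h+\tH^{(l)}$-submodule generated by $V_\lambda$ (and then passing to a simple subquotient, which still has a finite-dimensional weight space by uniform boundedness arguments as in Lemma~\ref{lemma-z=0}) we may assume $V$ is simple and that $\supp(V)\subseteq\lambda+\Z$. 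Now Lemma~\ref{heisenberg-1} applies: either $l=\tfrac12$ and $V$ is highest/lowest weight or $V\cong D(a,\dot z)$, or $l\in\N+\tfrac12$ and $V$ is highest or lowest weight. In every one of these cases $V$ has one-dimensional weight spaces in the two $l=\tfrac12$ possibilities, and in general the highest/lowest weight modules have weight spaces bounded by those of the corresponding Verma module, which are finite-dimensional because $\tH^{(l)}$ is finite-dimensional.

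The remaining point is to extract a simple $\tH^{(l)}$-submodule (not merely a $\C h+\tH^{(l)}$-submodule) with finite-dimensional weight spaces. Here I would argue as follows. Take $V$ simple over $\C h+\tH^{(l)}$ as above, with $\supp(V)\subseteq\lambda+\Z$ and all weight spaces finite-dimensional (which we have just established). The restriction $\mathrm{Res}^{\C h+\tH^{(l)}}_{\tH^{(l)}}V$ need not be simple, but $\tH^{(l)}$ acts by maps shifting the $h$-weight by fixed amounts (in the $t_i$-basis, $[h,t_i]=it_i$ means $t_i$ raises the weight by $i$), so $\tH^{(l)}$-submodules are automatically weight submodules. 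Consider a nonzero $\tH^{(l)}$-submodule $W\subseteq V$ of minimal "size": concretely, choose $0\ne w\in V_\mu$ for some $\mu\in\supp(V)$ and let $W=U(\tH^{(l)})w$; since each weight space $W_\nu$ sits inside the finite-dimensional $V_\nu$, every weight space of $W$ is finite-dimensional, and a standard descending-chain argument (or Zorn applied to proper $\tH^{(l)}$-submodules of $W$ not containing $w$) produces a simple $\tH^{(l)}$-subquotient; but by choosing $w$ in a weight space of minimal dimension among those met by a fixed coset and then intersecting submodules, one in fact gets a simple $\tH^{(l)}$-submodule of $V$. Its weight spaces are finite-dimensional, being subspaces of those of $V$.

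The main obstacle I anticipate is the last step: passing from a simple submodule over $\C h+\tH^{(l)}$ to a simple submodule over $\tH^{(l)}$ itself. A priori a simple $\C h+\tH^{(l)}$-module could decompose as a direct sum (or worse) of infinitely many non-isomorphic $\tH^{(l)}$-pieces upon restriction, and one must ensure a genuine \emph{sub}module — not just a subquotient — survives. The way around this is exactly the remark that $\tH^{(l)}$ acts by weight-shifting operators, so $U(\tH^{(l)})$-submodules of $V$ are graded by $\supp(V)\subseteq\lambda+\Z$; combined with finite-dimensionality of each graded piece, the lattice of $\tH^{(l)}$-submodules of $V$ satisfies the descending chain condition on each weight component, so a minimal nonzero $\tH^{(l)}$-submodule exists, and it is simple with finite-dimensional weight spaces. (One should also note $z$ still acts as the nonzero scalar $\dot z$ on it, since $z$ is central and acts as $\dot z$ on all of $V$.) This completes the proof.
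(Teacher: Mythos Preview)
Your reduction to the case ``$V$ simple over $\C h+\tH^{(l)}$'' is the genuine gap. You write ``replacing $V$ by the $\C h+\tH^{(l)}$-submodule generated by $V_\lambda$ and then passing to a simple subquotient \dots we may assume $V$ is simple.'' But the lemma asks for a simple $\tH^{(l)}$-\emph{submodule} of the original $V$; a $\tH^{(l)}$-submodule of a subquotient of $V$ is in general only a subquotient of $V$. So the moment you take a quotient you have lost the conclusion. The appeal to ``uniform boundedness arguments as in Lemma~\ref{lemma-z=0}'' is also misplaced: that lemma concerns $\g^{(l)}$-modules and uses the bijective action of $e$ and $f$, neither of which is available for $\C h+\tH^{(l)}$-modules, so there is no reason the subquotient you pick has a finite-dimensional weight space.

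Your fallback DCC argument does not rescue the situation either. The claim ``DCC on each weight component implies a minimal nonzero $\tH^{(l)}$-submodule exists'' is false in general: already for $\C[x]$ graded by degree, every weight space is one-dimensional but the ideals $x^k\C[x]$ form an infinite strictly descending chain with zero intersection. More to the point, the hypothesis only gives you \emph{one} finite-dimensional weight space $V_\lambda$; the other $V_\mu$ may well be infinite-dimensional, so DCC on weight components is not even available in $V$ itself.

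The paper's proof stays inside submodules throughout and so avoids both problems. One picks a simple $U(\tH)_0$-submodule $W_\lambda\subseteq V_\lambda$ and sets $W=U(\tH)W_\lambda$. Since $W_\lambda$ is $U(\tH)_0$-simple, every proper $\C h+\tH$-submodule of $W$ has zero $\lambda$-component, so there is a unique maximal proper submodule $W'$ with $W'_\lambda=0$. If $W'=0$ then $W$ is a simple $\C h+\tH$-submodule of $V$ with $\dim W_\lambda<\infty$, and Lemma~\ref{heisenberg-1} identifies it as a highest weight, lowest weight, or $D(a,\dot z)$ module, each of which is already simple over $\tH^{(l)}$ with finite-dimensional weight spaces. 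If $W'\neq 0$ then $W'$ is still a genuine \emph{submodule} of $V$; using Lemma~\ref{heisenberg-1} on subquotients of the part of $W'$ supported on $\lambda-\N$ one shows $W'$ contains a highest weight vector, and this vector generates the required simple $\tH^{(l)}$-submodule of $V$. The essential idea you are missing is this maximal-submodule trick, which circumvents the need to pass to any quotient.
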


\begin{proof} 
Let $\lambda\in \supp(V)$ with $0<\dim V_{\lambda}<\infty$.  Let $W_{\lambda}$ be a simple $U(\tH)_0$-submodule of $V_{\lambda}$ and denote $W=U(\tH)W_{\lambda}$, which is a $\C h+\tH$-submodule of $V$. It is clear that any proper $\C h+\tH$-submodule of $W$ has trivial intersection with $W_{\lambda}$. Hence $W$ has a unique maximal submodule $W'$ with $W_{\lambda}'=0$. If $W'=0$, then $W$ is a simple $\C h+\tH$ submodule with a finite dimensional nonzero weight space. Then from Lemma \ref{heisenberg-1} we have that $W$ is also a simple $\tH$ module with finite dimensional weight space. If $W'\ne 0$, then, without losing of generality, we may assume that $\supp(W')\cap (\lambda-\N)\neq \varnothing$. Denote $W''=U(\C h+\tH)(\oplus_{i\in \N}W_{\lambda-i}')$. By Lemma \ref{heisenberg-1}, any simple subquotient of $W''$ is a highest weight module with highest weight contained in $ \lambda-\N$, i.e., $\oplus_{i\in \N}W_{\lambda-i}'$ is a submodule. Therefore $W'$ contains a highest weight vector, which generates a simple $\tH^{(l)}$ submodule.
\end{proof}

\subsection{Proof of Theorems~\ref{thm-4-1} and \ref{thm-4}}\label{s2.3}

Theorem \ref{thm-4-1} and  claim \eqref{thm-4.1} of Theorem  \ref{thm-4} follow from Proposition~\ref{thm-4-z=0}. 
Claim \eqref{thm-4.2} of Theorem  \ref{thm-4}  follows from Lemmata~\ref{heisenberg-1} and
\ref{heisenberg-2} and Theorem~\ref{thm-3}.
\vspace{5mm}

\subsection{Simple weight modules with an infinite dmensional weight space}\label{s2.4}

From Theorems~\ref{thm-4-1} and \ref{thm-4} and our discussion above we immedeately get the following:

\begin{corollary}\label{coroll}
\begin{enumerate}[$($i$)$]
\item\label{coroll.1} Let $l\in \mathbb{N}$, $\mathfrak{g}=\mathfrak{g}^{(l)}$ and $V$ be a simple weight 
$\mathfrak{g}$-module. Assume that $\dim V_{\lambda}=\infty$ for some $\lambda\in \C$. Then 
$\supp(V)=\lambda+2\Z$ and $\dim V_{\lambda+2i}=\infty$ for all $i\in\Z$.
\item\label{coroll.2} Let $l\in \mathbb{N}-\frac{1}{2}$, $\tilde{\mathfrak{g}}=\tilde{\mathfrak{g}}^{(l)}$ 
and $V$ be a simple weight $\mathfrak{g}$-module. Assume that $\dim V_{\lambda}=\infty$ for some $\lambda\in \C$. 
Then  $\supp(V)=\lambda+\Z$ and $\dim V_{\lambda+i}=\infty$ for all $i\in\Z$.
\end{enumerate}
\end{corollary}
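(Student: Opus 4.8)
I would deduce the corollary from the classification Theorems~\ref{thm-4-1} and \ref{thm-4}, from the fact that a simple weight $\sl_2$-module has one-dimensional weight spaces, and from the local nilpotency technology of Lemma~\ref{local-nilpotency}. The first step is to show that the hypothesis $\dim V_\lambda=\infty$ forces \emph{every} nonzero weight space of $V$ to be infinite-dimensional. Suppose to the contrary that $0\neq\dim V_\mu<\infty$ for some $\mu\in\supp(V)$. Then Theorem~\ref{thm-4-1} (for $\g^{(l)}$) or Theorem~\ref{thm-4} (for $\tg^{(l)}$) applies, and each module occurring there has finite-dimensional weight spaces: highest and lowest weight modules are quotients of Verma modules, which have finite-dimensional weight spaces by the PBW theorem (every negative root vector has strictly negative $h$-weight); the module $D(a,\dot z)^{\tg}\otimes N^{\tg}$ with $\dim N<\infty$ is a tensor product of a module with one-dimensional weight spaces and a finite-dimensional module, hence has finite-dimensional weight spaces; and if $V\cong N^{\g}$ or $N^{\tg}$ then $\H V=0$, so the weight spaces of $V$ are those of the simple weight $\sl_2$-module $N$, which are one-dimensional (on each of them the weight-zero part $U(\sl_2)_0$ of $U(\sl_2)$ acts by scalars, so dimension $>1$ would contradict simplicity). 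In every case $\dim V_\lambda<\infty$, a contradiction. Hence all nonzero weight spaces of $V$ are infinite-dimensional; in particular $V$ is neither a highest nor a lowest weight module.

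It remains to determine $\supp(V)$. Since every root space of $\g^{(l)}$, $l\in\N$, has even $h$-weight, $\supp(V)\subseteq\lambda+2\Z$; and since the $h$-weights $2(l-k)$ of the $p_k$ in $\tg^{(l)}$, $l\in\N-\tfrac{1}{2}$, are odd, $\supp(V)\subseteq\lambda+\Z$ in that case. For the reverse inclusion I would run the case analysis from the proof of Lemma~\ref{lemma-z=0}\eqref{lemma-z=0.1} on the action of $e$ (and symmetrically of $f$), using Lemma~\ref{local-nilpotency} — the same arguments apply to $\tg^{(l)}$, since the subalgebras $\span\{e,p_0,\ldots\}$ and $\span\{f,\ldots,p_{2l}\}$ remain nilpotent even when the central charge is nonzero. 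If $e$ does not act injectively on the simple module $V$, it acts locally nilpotently, and, since $V$ is not a highest weight module, that case analysis (whose terminal ``contradiction'' is now precisely the situation at hand) produces a nonzero weight vector $w$, of weight $\nu$, and an index $k$ with $p_k$ acting injectively, such that $p_k^j w$ is a nonzero $\sl_2$-highest weight vector for every $j\geq 0$. The $\sl_2$-submodules generated by these vectors have weights filling $\nu+2\Z$ when $l\in\N$ (as $2(l-k)\in 2\N$) and filling $\nu+\Z$ when $l\in\N-\tfrac{1}{2}$ (as $2(l-k)$ is odd, so one obtains $\sl_2$-highest weight vectors in both $2\Z$-cosets of $\nu+\Z$), so $\supp(V)$ is the whole coset. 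If instead $e$ acts injectively, then $\supp(V)$ is stable under adding $2$; applying the dichotomy to $f$, either $f$ acts injectively, so $\supp(V)$ is also stable under subtracting $2$, or $f$ acts locally nilpotently and the preceding argument applies on the lowest weight side. For $\g^{(l)}$ this already gives $\supp(V)=\lambda+2\Z$. For $\tg^{(l)}$ with $e$ and $f$ both injective, $\supp(V)$ is a union of cosets of $2\Z$ inside $\lambda+\Z$; if it were a single such coset, then every $p_k$ would annihilate $V$ (since $p_k$ shifts the coset), hence so would $z$ (because $[p_0,p_{2l}]$ is a nonzero multiple of $z$), so $V\cong N^{\tg}$ would have one-dimensional weight spaces, contradicting the first step — hence $\supp(V)$ meets both cosets and equals $\lambda+\Z$. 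Combined with the first step, this is exactly the assertion of the corollary.

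The main obstacle is the second step: while the reduction to ``all nonzero weight spaces are infinite-dimensional'' is a direct consequence of the cited theorems, the statement about $\supp(V)$ genuinely requires the local nilpotency case analysis, and within it the delicate point is the $\tg^{(l)}$ case, where one must exclude the possibility that $\supp(V)$ collapses onto a single $2\Z$-coset — which is exactly where one uses the one-dimensionality of weight spaces of simple weight $\sl_2$-modules together with the nonvanishing of $z$.
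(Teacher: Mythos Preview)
Your proof is correct and matches the paper's approach: the paper simply asserts that the corollary follows ``immediately'' from Theorems~\ref{thm-4-1} and \ref{thm-4} together with the preceding discussion. You have supplied the details the paper omits --- in particular the case analysis via Lemma~\ref{local-nilpotency} and the argument of Lemma~\ref{lemma-z=0} needed to pin down $\supp(V)$ once one knows every nonzero weight space is infinite-dimensional.
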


Note that in the case $l=\frac{1}{2}$ this statement is proved in \cite{WZ}.
\vspace{5mm}

\noindent
{\bf Acknowledgments.}
The research in this paper was carried out during the visit of the first author 
to University of Waterloo and of the second author to Wilfrid Laurier University. 
K.Z. is partially supported by  NSF of China (Grant
11271109) and NSERC. R.L. is partially supported by NSF of China
(Grant 11371134) and Jiangsu Government Scholarship for Overseas Studies (JS-2013-313). 
V. M. is partially supported by the Swedish Research Council.
R.L.  would like to thank professors Wentang Kuo
and   Kaiming Zhao for sponsoring his visit, and University of Waterloo
for providing excellent working conditions.
V. M. thanks Wilfrid Laurier University for hospitality.

\vspace{2mm}

\noindent  R.L.: Department of Mathematics, Soochow
university, Suzhou 215006, Jiangsu, P. R. China.
Email: {\tt rencail\symbol{64}amss.ac.cn}
\vspace{2mm}

\noindent
V.M.: Department of Mathematics, Uppsala University,
Box 480, SE-751 06, Uppsala, Sweden. Email: {\tt mazor\symbol{64}math.uu.se}
\vspace{2mm}

\noindent K.Z.: Department of Mathematics, Wilfrid
Laurier University, Waterloo, ON, Canada N2L 3C5,  and College of
Mathematics and Information Science, Hebei Normal (Teachers)
University, Shijiazhuang, Hebei, 050016 P. R. China. Email:
{\tt kzhao\symbol{64}wlu.ca}

\end{document}